\documentclass[amscd,amssymb,verbatim,10pt]{amsart}
\usepackage[fontsize = 10bp]{fontsize}

\usepackage{verbatim}
\usepackage{amssymb, latexsym, amsmath, amscd, array, graphicx}
\usepackage{hyperref}
\usepackage[all]{xy}
\usepackage[scr]{rsfso}


\usepackage{mathtools}
\usepackage{color}
\hypersetup{
    colorlinks=true,
    linkcolor=red,
    urlcolor=black,
    linktoc=all,
    citecolor=blue
           }
\usepackage{tikz-cd}
\usepackage{bbm,graphicx}

\theoremstyle{plain}

\newtheorem{thm}{Theorem}[section]
\newtheorem{theorem}[thm]{Theorem}

\newtheorem{lemma}[thm]{Lemma}

\newtheorem{proposition}[thm]{Proposition}
\newtheorem{corollary}[thm]{Corollary}

\theoremstyle{definition}
\newtheorem{definition}[thm]{Definition}

\newtheorem{remark}[thm]{Remark}

\newtheorem{example}[thm]{Example}


\DeclareMathOperator{\cat}{\mathsf{cat}}
\DeclareMathOperator{\TC}{\mathsf{TC}}
\DeclareMathOperator{\dcat}{\mathsf{dcat}}
\DeclareMathOperator{\dTC}{\mathsf{dTC}}
\DeclareMathOperator{\icat}{\mathsf{icat}}
\DeclareMathOperator{\iTC}{\mathsf{iTC}}
\DeclareMathOperator{\proj}{proj}

\DeclareMathOperator{\Ker}{{\rm Ker}}

\DeclareMathOperator{\supp}{{\rm supp}}

\def\exp{\protect\operatorname{exp}}

\def\B{{\mathcal B}}

\def\H{{\mathcal H}}
\def\U{{\mathscr U}}
\def\V{{\mathscr V}}
\def\W{{\mathscr W}}

\def\F{{\mathbb F}}

\newcommand \pa[2]{\frac{\partial #1}{\partial #2}}



\def\scr{\mathcal}

\def\B{{\scr B}}

\def\C{{\mathbb C}}
\def\Z{{\mathbb Z}}
\def\Q{{\mathbb Q}}
\def\R{{\mathbb R}}

\def\1{\hbox{\rm\rlap {1}\hskip.03in{\rom I}}}
\def\Bbbone{{\rm1\mathchoice{\kern-0.25em}{\kern-0.25em}
{\kern-0.2em}{\kern-0.2em}I}}


\def\pa{\partial}

\def\wt{\widetilde}
\def\wh{\widehat}

\def\ov{\overline}

\long\def\forget#1\forgotten{} %

\newcommand\ver[1]{\marginpar{\tiny Changed in Ver \VER}}

\date{\today}

\begin{document}

\title[Intertwining category and complexity]{Intertwining category and complexity}

\author[Ekansh Jauhari]{Ekansh Jauhari}

\address{Ekansh Jauhari, Department of Mathematics, University
of Florida, 358 Little Hall, Gainesville, FL 32611-8105, USA.}
\email{ekanshjauhari@ufl.edu}

\subjclass[2020]
{Primary 55M30; Secondary 60B05, 54A10, 68T40.}

\keywords{}

\begin{abstract}
We develop the theory of the intertwining distributional versions of the LS-category and the sequential topological complexities of a space $X$, denoted by $\icat(X)$ and $\iTC_m(X)$, respectively. We prove that they satisfy most of the nice properties as their respective distributional counterparts $\dcat(X)$ and $\dTC_m(X)$, and their classical counterparts $\cat(X)$ and $\TC_m(X)$, such as homotopy invariance and special behavior on topological groups. We show that the notions of $\iTC_m$ and $\dTC_m$ are different for each $m \ge 2$ by proving that $\iTC_m(\H)=1$ for all $m \ge 2$ for Higman's group $\H$. Using cohomological lower bounds, we also provide various examples of locally finite CW complexes $X$ for which $\icat(X) > 1$, $\iTC_m(X) > 1$, $\icat(X) = \dcat(X) = \cat(X)$, and $\iTC(X) = \dTC(X) = \TC(X)$.
\end{abstract}



\keywords{Sequential intertwining topological complexity, intertwining Lusternik--Schnirelmann category, resolvable paths, probability measures, sequential distributional topological complexity, intertwined navigation algorithm}

\maketitle

\section{Introduction}\label{Introduction}
In the field of robotics, the ultimate aim is to develop autonomously functioning mechanical systems that can understand well-defined descriptions of tasks and execute them without any further human intervention. One of the simplest such tasks is navigating the system through a sequence of positions inside a given configuration space $X$. Constructing a robot that can accomplish this
task autonomously is sometimes called the \textit{motion planning problem.} 

We consider the following sequential motion planning problem. Given a system with configuration space $X$, a number $m \ge 2$, and positions $x_1, x_2, \ldots, x_m$ in $X$, find a continuous motion planning algorithm that takes as input the ordered $m$-tuple of positions $\ov{x}=(x_1,x_2, \ldots, x_m) \in X^m$ and produces as output the continuous motion of the system from $x_1$ to $x_m$ via the $m-2$ intermediate positions $x_2, \ldots, x_{m-1}$ attained in that order.

M. Farber observed in~\cite{Far1} that even when $m = 2$ and the system just needs to move from an initial position to a final position in $X$, such a continuous motion planning algorithm does not exist on all of $X \times X$ unless $X$ is contractible. So, he developed in~\cite{Far1},~\cite{Far2} the notion of \textit{topological complexity} of $X$, denoted $\TC(X)$, which is one less than the minimum number of subspaces into which $X \times X$ needs to be partitioned such that a continuous (partial) motion planning algorithm can exist on each of the subspaces. In a sense, $\TC(X)$ provides the minimum number of rules required to plan the above motion, and thus, gives a measure of the complexity in planning the motion. For any general $m \ge 2$, the idea of topological complexity was extended by Y. B. Rudyak in~\cite{Rud} in a natural way to get the $m^{th}$ \textit{sequential topological complexity} of $X$, denoted $\TC_m(X)$, which measures the complexity of planning the sequential motion.

Ideally, one would want to minimize the complexity of motion planning. Given a configuration space $X$, this can be achieved for some advanced robotic systems with special features. In a recent work with A. Dranishnikov~\cite{DJ}, we considered systems that can break into finitely many weighted pieces at the initial position $x$ so that all the pieces travel \textit{independently} to the desired final position $y$ where they can reassemble back into the system. The continuous motion can now be planned on $X \times X$ using the unordered collection of the system's weighted pieces instead of using the system as a whole. We let $n$ be the maximum number of pieces that the system can break into while traveling between all possible pairs of positions $(x,y)$. To measure the complexity of such a motion, the notion of the \textit{distributional topological complexity} of $X$, denoted $\dTC(X)$, was introduced in~\cite{DJ} as one less than the minimum value of $n$ for which the system can achieve such a motion by traveling as smaller pieces. Whenever $X$ is non-contractible, the system must break into at least two pieces for a continuous motion planning algorithm to exist.

If we consider repeated breaking and reassembling of such a system in the same weighted pieces, then the sequential motions can be planned. This natural extension of $\dTC$ was obtained by us recently in~\cite{Ja}, where the generalized notion of the $m^{th}$ \textit{sequential distributional topological complexity} of $X$, denoted $\dTC_m(X)$, was 
studied. Around the same time, B. Knudsen and S. Weinberger independently developed in~\cite{KW} their probabilistic version of $\TC_m$, called the $m^{th}$ \textit{sequential analog topological complexity}, denoted $A\TC_m(X)$ for a space $X$.

In~\cite{Far2},~\cite{Far3}, Farber had seen $\TC$ as a measure of the minimal level of randomness in motion planning algorithms. To measure this randomness, he considered ordered probability distributions. This is equivalent to the breaking of the system into an \textit{ordered} collection of pieces as opposed to the unordered collection of pieces considered in the definition of $\dTC_m$. Thus, $\dTC_m(X)\le\TC_m(X)$. Similarly, since~\cite{KW} argued that their notions measure analog randomness by considering unordered probability distributions, it follows that $A\TC_m(X)\le\TC_m(X)$. Even though there is a formal difference in the definition of $\dTC_m$ and $A\TC_m$ (see~\cite[Section 1]{Dr} and Section~\ref{delicate}), it was conjectured in~\cite[Conjecture 1.2]{KW} that the two notions coincide on metric spaces. In that direction, we showed in~\cite[Section 8]{Ja} that $\dTC_m(X) \le A\TC_m(X)$. So, given an advanced system with a configuration space $X$, the notion of $\dTC_m(X)$ seems to give the most promising lower bound so far to the complexity of the motion planning algorithms on $X$. 

\subsection{Intertwining motion}\label{intmot}
A further improvement to $\dTC$ was proposed in the Epilogue of our earlier work~\cite{DJ}. The basic idea is again of breaking and reassembling the system, but this time, while traveling between a pair of positions $x$ and $y$ in $X$, we allow the pieces of the system to \textit{intertwine} with each other and not necessarily have them travel independently as in the case of $\dTC$. More precisely, during the motion, the weighted pieces are allowed to join with each other and break further into differently weighted pieces in a controlled way such that the unordered probability distribution of the motion from $x$ to $y$ remains unchanged. When two pieces join, their weights add up. When a piece splits into two or more pieces, its weight splits accordingly to give weights to the new pieces. Unlike the case of $\dTC$ where a specific motion of the system from $x$ to $y$ was obtained from a unique unordered collection of weighted pieces, this new approach allows multiple unordered collections of weighted pieces of the system to produce the same motion (see Section~\ref{Paths2} for various such examples). Therefore, we now measure the complexity of the motion planning using only the image of the system's motion rather than using the unordered collections of its weighted pieces that create that motion.

Of course, this approach offers more freedom of motion; therefore, one can expect to have even lesser navigational complexity using this approach. To measure it, the notion of the \textit{intertwining distributional topological complexity} of $X$, denoted $\iTC(X)$, was proposed in~\cite{DJ} as one less than the minimum value of $n$ for which the system can achieve a continuous motion by traveling as smaller pieces that can possibly intertwine with each other.
With this approach, a sequential motion of the system can also be planned in the same way as before, whose complexity can then be measured by the generalized notion of the $m^{th}$ \textit{sequential intertwining topological complexity} of $X$ that we will denote by $\iTC_m(X)$ and study in this paper. It is intuitively clear that $\iTC_m(X) \le \dTC_m(X)$.

As far as the physical applications of our intertwining invariants are concerned, we feel that the notion of $\iTC_m$ can potentially be useful in minimizing the navigational complexity of the (sequential) motion planning problem for large flocks of identical drones\hspace{0.5mm}\footnote{\hspace{0.5mm}A nice example of such a situation appears in Episode 6 ``Hated in the Nation" in Season 3 of the Netflix TV series ``Black Mirror". We thank A.~Dranishnikov for this reference.}, where an intertwining motion is more likely to occur.

\subsection{Comparison}\label{why}
At this stage, one may ask why do we need another notion of complexity and a new sequence of invariants? As mentioned above, our target is to improve robot motion planning by minimizing the complexity of motion planning for systems with a given configuration space. These new notions of $\dTC_m$ and $\iTC_m$ make it possible by offering lower bounds to the respective notion of $\TC_m$ and reducing complexity in some cases. For the simple problem of planning the motion of a rotating line in $\R^{k+1}$ that is fixed along a revolving joint at a base point, $\dTC$ offers a significantly better solution than $\TC$. While in the classical setting of $\TC$, the minimum number of rules required to plan such a motion equals one more than the immersion dimension of $\R P^k$ when $k \ne 1,3,7$ and $k+1$ otherwise~\cite{FTY}, in the distributional setting, only two rules are enough to plan this motion for all $k \ge 1$,~\cite{DJ},~\cite{KW}. 

Of course, another important reason is that these new notions bring with them their LS-category versions ($\icat$ and $\dcat$) and encourage us to get examples of spaces on which they disagree with the old notions, hence creating different theories. Several ways in which the theory of the distributional invariants differs from that of the classical invariants are highlighted in~\cite{DJ},~\cite{Ja},~\cite{KW}. In the same spirit, it was shown in~\cite[Section 6]{Dr} that $\iTC(\H) = 1$ for Higman's group $\H$ (which was introduced in \cite{Hi}). As a consequence, $\icat(\H) = 1$ is obtained, thereby showing that an Eilenberg--Ganea-type theorem~\cite{EG} does not hold in the case of intertwining invariants. It is noteworthy that a version of the Eilenberg--Ganea theorem was proven for torsion-free groups in the distributional case,~\cite{KW},~\cite{Dr}. To see more differences between the theories of the intertwining and the distributional invariants, we show in Section~\ref{higmansecction} of this paper that $\dTC_m(\H)$ is at least $2(m-1)$ whereas $\iTC_m(\H)=1$. So, there are cases where these new notions offer strict improvements over the previous notions as far as motion planning is concerned. Hence, it seems worthwhile to develop their properties and understand them better.

\subsection{Continuous motion planning algorithm}\label{delicate}
For a metric space $Z$, let $\mathcal{B}(Z)$ denote the set of probability measures on $Z$ and for any $n \ge 1$, let
\[
\mathcal{B}_{n}(Z) = \left\{\mu \in \mathcal{B}(Z) \mid |\supp(\mu)| \le n \right\}
\]
denote the space of probability measures on $Z$ supported by at most $n$ points, equipped with the Lévy--Prokhorov metric~\cite{Pr}. The support of any $\mu \in \mathcal{B}_n(Z)$ is given by $\supp(\mu) =\{z\in Z\mid \lambda_z>0\}$. There is an inclusion $\omega_n: Z \hookrightarrow\B_n(Z)$ defined as $\omega_n:x \to \delta_x$ that sends $x \in Z$ to the Dirac measure $\delta_x$ supported at $x$. 

When $Z$ is non-metrizable, an alternative description of the space $\B_n(Z)$ with another topology is given as follows. Let $*^n Z$ denote the $n^{th}$ iterated join of $Z$ in the sense of~\cite{KK}. The group $S_n$ acts on $*^n Z$ by permutation of coordinates. The orbit space of this action is the $n^{th}$ iterated symmetric join $\text{Sym}(*^n Z)$. Then as in~\cite{KK}, $\B_n(Z)=\text{Sym}(*^n Z)/\sim$ where $t_1x_1+t_2x_1+\cdots+t_nx_n \sim (t_1+t_2)x_1+\cdots+t_nx_n$. This is also called the barycenter space of $Z$. Let $q:\text{Sym}(*^n Z) \to \B_n(Z)$ be the quotient map of the above relation that sends unordered formal linear combinations of size $n$ to probability measures of support at most $n$. Using $q$, the quotient topology $\mathscr{T}_1$ is induced on $\B_n(Z)$. In~\cite{KW}, the topology $\mathscr{T}_1$ was considered while defining the analog invariants, which could be different from the metric topology $\mathscr{T}_2$ considered in~\cite{DJ},~\cite{Ja}. It was explained in~\cite[Section 8]{Ja} that the identity map $\mathcal{I}:(\B_n(Z),\mathscr{T}_1) \to (\B_n(Z),\mathscr{T}_2)$ is continuous when $Z$ is a metric space. Furthermore, if $Z$ is a locally finite CW complex or a discrete space, then it is possible to show that $\mathcal{I}$ is a homotopy equivalence,~\cite[Section 1]{Dr}. So, the definitions of the distributional invariants and the analog invariants agree on such spaces. Therefore, we can study the intertwining invariants on these spaces as lower bounds to their respective distributional invariants by working in the space $(\B_n(Z),\mathscr{T}_2)$.

Let $Z = P(X) = \{f \hspace{1mm} | \hspace{1mm} f: [0,1] \to X\}$ be the path space of $X$ with the compact-open topology, and $P(\ov{x}) = \{f \in P(X) \hspace{1mm} | \hspace{1mm} f(t_i) = x_i \text{ for all } 1 \le i \le m\}$ for any $\ov{x}  = (x_1,\ldots,x_m) \in X^m$, where $t_i = (i-1)/(m-1)$ for all $1 \le i \le m$. Also, for Dirac measures $\delta_{x_i}$, let
\[
P(\delta_{x_1},\ldots,\delta_{x_m}) = \{ g \in P(\B_n(X)) \mid g(t_i) = \delta_{x_i} \text{ for all } i\}.
\]
Consider the continuous map $\Phi_n:\B_n(P(X))\to P(\B_n(X))$
defined as
\[
\Phi_n\left(\sum \lambda_\alpha \alpha \right)(t) = \sum \lambda_\alpha \alpha(t)
\]
for all $t \in I=[0,1]$. Since we are concerned with the image of the system's motion as a consequence of repeated breaking and reassembling and possible intertwining of pieces, the desired sequential motion planning algorithm is a continuous assignment of each $\ov{x} \in X^m$ to a path in $P(\delta_{x_1},\ldots,\delta_{x_m})$. But since we also want the distribution of the sequential motion from $x_1$ to $x_m$ to remain unchanged, we additionally require our path in $P(\delta_{x_1},\ldots,\delta_{x_m})$ to have a preimage in $\B_n(P(X))$ under the map $\Phi_n$. Thus, the desired $n$-intertwined $m$-navigation algorithm on $X$ is a continuous map 
\[
s_m:X^m \to P(\B_n(X))
\]
such that for each $\ov{x} \in X^m$, we have $s_m(\ov{x}) \in P(\delta_{x_1},\ldots,\delta_{x_m})$ and $\Phi_n^{-1}(s_m(\ov{x})) \ne \emptyset$. Such paths in $P(\B_n(X))$ that have a preimage in $\B_n(P(X))$ under $\Phi_n$ are called \textit{resolvable paths} and their preimages are called their \textit{resolver measures}. Like the case of $\dTC_m$~\cite[Section 1.B]{Ja}, as we aim to minimize the maximum number of pieces $n$ into which the system can break, we obtain the notion of $\iTC_m$.

\subsection{About this paper} 
This paper is organized as follows. In Section~\ref{Preliminaries}, we revisit the theory of the classical and the distributional notions of LS-category and (sequential) topological complexity. In Section~\ref{Resolvable Paths}, we study resolvers and the space of resolvable paths. The intertwining invariants are formally defined and studied in Section~\ref{Intertwining Invariants} where we prove several properties for them, such as their homotopy invariance, behavior on products of spaces, and their relationship with each other and with their respective distributional counterparts. In Section~\ref{higmansecction}, we establish that the notions of $\dTC_m$ and $\iTC_m$ are different for each $m \ge 2$ by showing that $\iTC_m(\H) = 1$ for Higman's group $\H$. Section~\ref{Characterizations} contains some simple characterizations of $\iTC_m$ and $\icat$ in terms of pullback. In Section~\ref{Bounds}, we obtain lower bounds to $\icat(X)$ and $\iTC_m(X)$
using the cohomology of their respective symmetric squares. We also discuss the difficulty in obtaining better lower bounds. Finally, we end this paper in Section~\ref{Computations} by performing some computations using our lower bounds and providing examples of spaces $X$ for which $\icat(X) > 1$, $\iTC_m(X) >1$, $\icat(X) = \dcat(X)$, and $\iTC(X) = \dTC(X)$.

We use the following notations and conventions in this paper. All the topological spaces considered are path-connected ANR spaces. The composition of functions $f:X \to Y$ and $g:Y \to Z$ is denoted by $gf$. The symbol ``$=$" is used to denote homeomorphisms and isomorphisms, and the symbol ``$\simeq$" is used to denote homotopy equivalences of spaces and maps. For any fixed $m \ge 2$, we agree to set timestamps $t_i = (i-1)/(m-1)$ for all $1 \le i \le m$.

\section{Classical and distributional invariants}\label{Preliminaries}
We recall the classical definitions of the Lusternik--Schnirelmann category~\cite{CLOT} and the sequential topological complexity~\cite{Far1},~\cite{Rud},~\cite{BGRT} of a space.

The \emph{Lusternik--Schnirelmann category} (LS-category), $\cat (X)$, of $X$ is the minimal integer $n$ such that there is a covering $\{U_i\}$ of $X$ by $n+1$ open sets each of which is contractible in $X$.
    
For given $m \ge 2$, the $m^{th}$ \emph{sequential topological complexity}, $\TC_m(X)$, of $X$ is the minimal integer $n$ such that there is a covering $\{V_i\}$ of $X^m$ by $n+1$ open sets over each of which there is a continuous map $s_i : V_i \to P(X)$ such that for each $\ov{x} \in V_i \subset X^m$, we have that $s_i(\ov{x})(t_j) = x_j$ for all $1 \le j \le m$.

We also recall the definitions of the distributional LS-category~\cite{DJ} and the sequential distributional topological complexity~\cite{DJ},~\cite{Ja} of a space. 

\begin{definition}
    The {\em distributional LS-category}, $\dcat(X)$, of a space $X$, is the minimal integer $n$ for which there exists a map $H: X \to \B_{n+1}(P_0(X))$ such that $H(x) \in \B_{n+1}(P(x,x_0))$ for all $x \in X$.
\end{definition}

Here, the pointed space $(X,x_0)$ is considered, and $P_0(X)$ denotes the based path space containing paths $\gamma$ in $X$ with $\gamma(1)=x_0$.

\begin{definition}
    For a given $m \ge 2$, the {\em $m^{th}$ sequential distributional topological complexity}, $\dTC_m(X)$, of a space $X$, is the minimal integer $n$ for which there exists a map $s_m: X^m \to \B_{n+1}(P(X))$ such that $s_m(\ov{x}) \in \B_{n+1}(P(\ov{x}))$ for all $\ov{x} \in X^m$.
\end{definition}

The distributional invariants $\dcat$ and $\dTC_m$ satisfy~\cite{DJ},~\cite{Ja} most of the nice properties and relations as the classical invariants $\cat$ and $\TC_m$,~\cite{Far1},~\cite{Rud},~\cite{BGRT}. However, in general, $\dcat$ and $\cat$ are different notions, and similarly, so are $\dTC_m$ and $\TC_m$ for each $m \ge 2$. In particular, we have $\dcat(\R P^n) = 1 = \dTC(\R P^n)$ for all $n \ge 1$ due to~\cite[Section 3]{DJ}, which is in stark contrast to the equality $\cat(\R P^n)=n$ and the main result of~\cite{FTY}. Furthermore, $\dTC_m(\R P^n) \le 2m+1$ follows from~\cite[Section 8.A]{Ja} (see also~\cite[Corollary 6.6]{KW}) for each $m\ge 2$. As explained in Section~\ref{why}, these examples lead to various differences between the theories of the classical and the distributional invariants (see also Remark~\ref{obv}). Another such instance of that is mentioned in Remark~\ref{newnewnew}.

\subsection{Lower bounds for $\cat$ and $\TC_m$} 
Sharp lower bounds to these invariants come from cohomology. We first recall some basic notions. 

The \textit{cup length} of a space $X$ with coefficients in a ring $R$ (or alternatively, the cup length of the cohomology ring $H^*(X;R)$), denoted $c\ell_R(X)$, is the maximal length $k$ of a non-zero cup product $\alpha_1 \smile\cdots\smile\alpha_k\neq 0$ of cohomology classes $\alpha_i$ of positive dimensions. 

For a fixed $m \ge 2$, let $\Delta : X \to X^m$ be the diagonal map that induces the homomorphism $\Delta^*: H^*(X^m;R) \to H^*(X;R)$. The elements of the ideal $\Ker(\Delta^*)$ are called $m^{th}$ \emph{zero-divisors}, and the cup length of $\Ker(\Delta^*)$ is called the $m^{th}$ {\em zero-divisors cup length} of $X$, denoted $zc\ell^m_R(X)$. When $m = 2$, we denote it by $zc\ell_R(X)$.

It is well-known that for any ring $R$, $c\ell_R(X) \le \cat(X)$, see~\cite{CLOT}, and that $zc\ell^m_R(X) \le \TC_m(X)$, see~\cite{Far1},~\cite{Rud}, and~\cite{BGRT}.

\subsection{Lower bounds for $\dcat$ and $\dTC_m$}\label{pre2} For a space $Y$ and $k \ge 1$, its $k^{th}$ symmetric power $SP^{k}(Y)$ is defined as the orbit space of the action of the symmetric group $S_{k}$ on the product space $Y^k$ by permutation of coordinates. In this section, we regard each $[y_1, \ldots, y_k] \in SP^k(Y)$ as a formal sum $\sum n_i y_i$, where $n_i \ge 1$ and $\sum n_i = k$, subject to the equivalence $n_1 y + n_2 y = (n_1 + n_2)y$. We define the diagonal map $\pa^Y_k : Y \to SP^k(Y)$ as $\pa^Y_k (y) = [y, \ldots, y] = ky$. The following result in singular cohomology will be very useful later.

\begin{proposition}[\protect{\cite[Proposition 4.3]{DJ}}]\label{useful}
For a finite simplicial complex $Y$ and any $k \ge 1$, the induced homomorphism $(\pa_k^Y)^* : H^*(SP^k(Y);\F) \to H^*(Y;\F)$ is surjective if $\F \in \{\Q,\R\}$.
\end{proposition}

We note that Proposition~\ref{useful} was proven in~\cite{DJ} for rational coefficients but the same proof holds for real coefficients as well!

In this paper, we shall regard $Y$ as the subspace of $SP^k(Y)$ under the diagonal map $\pa_k^Y$ and use the term {\em inclusion} to refer to $\pa_k^Y$.

\begin{theorem}[\protect{\cite{DJ}}]
 Suppose that $\alpha_{i}^{*} \in H^{k_{i}}\left(SP^{n!}(X);R\right)$, $1 \leq i \leq n$, for some ring $R$ and $k_{i} \geq 1$. Let  $\alpha_{i} \in H^{k_{i}}(X;R)$ be the image of $\alpha_{i}^{*}$ under the induced homomorphism $(\pa^X_{n!})^*$ such that $\alpha_{1} \smile \cdots \smile \alpha_{n} \neq 0$. Then $\dcat(X) \geq n.$
 \end{theorem}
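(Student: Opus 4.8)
The plan is to establish the contrapositive: assuming $d\cat(X)\le n-1$, I show that $\alpha_1\smile\cdots\smile\alpha_n=0$, contradicting the hypothesis. By definition, $d\cat(X)\le n-1$ supplies a map $H\colon X\to\B_n(P_0(X))$ with $H(x)\in\B_n(P(x,x_0))$ for every $x$. I would first reinterpret $H$: a convex combination of Dirac measures equals $\delta_x$ precisely when every path in $\supp(\mu)$ begins at $x$, so the fibre over $\delta_x$ of the evaluation $\mathrm{ev}_0\colon\B_n(P_0(X))\to\B_n(X)$, $\mu\mapsto(\mathrm{ev}_0)_*\mu$, is exactly $\B_n(P(x,x_0))$; hence $H$ is nothing but a lift of the Dirac embedding $\omega_n\colon X\to\B_n(X)$ along $\mathrm{ev}_0$. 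Since $P_0(X)$ is contractible --- push each path to its terminal point $x_0$ --- applying the barycenter and the symmetric-power functors to this contraction shows that $\B_k(P_0(X))$ and $SP^k(P_0(X))$ are contractible for every $k$. In particular $\omega_n=\mathrm{ev}_0\circ H$ is null-homotopic, and in fact it factors through the contractible total space of a fibration over $\B_n(X)$.

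The core is to convert the hypothesis $\alpha_1\smile\cdots\smile\alpha_n\neq0$ --- which, by assumption, lies in the image of $(\pa^X_{n!})^*\colon H^*(SP^{n!}(X);R)\to H^*(X;R)$ --- into an obstruction to the lift $H$. The integer $n!$ is the clue: it is a common denominator, so that a probability measure with at most $n$ atoms, after its weights are cleared to multiples of $1/n!$, becomes a formal sum of $n!$ points of $X$, i.e.\ a point of $SP^{n!}(X)$. I would therefore enlarge the picture to $\B_{n!}(P_0(X))\supset\B_n(P_0(X))$, which contains the subspace $SP^{n!}(P_0(X))$ of measures with weights in $\tfrac1{n!}\Z$, and use the compatible diagonal copies $\B_n(X)\hookrightarrow\B_{n!}(X)\hookleftarrow SP^{n!}(X)$ of $X$ (via $\omega_n$ and $\pa^X_{n!}$) together with $\mathrm{ev}_0$. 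The goal is to upgrade the null-homotopy of $\omega_n$ into the statement that the composite $X\xrightarrow{\pa^X_{n!}}SP^{n!}(X)\to SP^{n!}(X)/W$ is null-homotopic, where $W\subset SP^{n!}(X)$ is a suitable ``fat'' subspace (formal sums meeting the basepoint) whose relative cohomology $H^*(SP^{n!}(X),W;R)$ detects the $n$-fold product $\alpha_1^*\smile\cdots\smile\alpha_n^*$ and restricts it along $\pa^X_{n!}$ to $\alpha_1\smile\cdots\smile\alpha_n$; this would force the latter to be $0$. This is the exact analogue of the classical fat-wedge proof of $\cuplength_R(X)\le\cat(X)$, with $SP^{n!}(X)$ replacing the $n$-fold Cartesian power and $\B_n$ playing the role of the fibrewise join.

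The step I expect to be genuinely difficult is the comparison between the barycenter space and the symmetric power. There is no continuous map $\B_n(X)\to SP^{n!}(X)$ extending the diagonal of $X$: multiplicities in $SP^{n!}$ are discrete while weights in $\B_n$ vary continuously, and the degenerations encoded in $\B_n(X)=\text{Sym}(*^nX)/\!\sim$ block any such map. So the required factorisation through $SP^{n!}(X)/W$ cannot come from naively pulling back $H$; instead one must argue directly with the measure $H$ --- exploiting that a measure with $\le n$ atoms always carries an atom of weight $\ge 1/n$, that the $(n-1)$-simplex of weights is contractible and barycentrically subdivided into $n!$ chambers permuted by $S_n$, and that $\mathrm{ev}_t$ interpolates continuously between $\omega_n$ and the constant map $\delta_{x_0}$ --- or appeal to the structural results on weighted barycenter spaces in~\cite{KK}. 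Once this comparison is in place, the rest (a partial computation of $H^*(SP^{n!}(X),W;R)$ showing the length-$n$ products survive, and the diagram chase with $\mathrm{ev}_0$) is routine; one may also assume $X$ finite and carrying the $\alpha_i^*$, making Proposition~\ref{useful} available for $R\in\{\Q,\R\}$, though for general $R$ the classes $\alpha_i^*$ are given and that proposition is not needed.
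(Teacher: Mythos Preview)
Your setup is sound and you correctly identify the central obstacle: there is no continuous map $\B_n(X)\to SP^{n!}(X)$ extending the diagonal, so the global null-homotopy of $\omega_n$ cannot be pushed directly to $SP^{n!}(X)$. However, your proposed workarounds (an atom of weight $\ge 1/n$, the barycentric subdivision of the weight simplex, a fat-wedge quotient $SP^{n!}(X)/W$) do not lead to a proof. The fat-wedge analogy breaks down because $SP^{n!}(X)$ has no product decomposition against which to take a fat wedge, and the chamber structure on the weight simplex concerns orderings of weights, not support sizes, so it does not produce the needed comparison either. As written, the proposal stops exactly at the hard step without supplying the missing idea.

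The proof in \cite{DJ} does not attempt a global comparison. Instead it \emph{stratifies $X$ by the size of the support of $H(x)$}: set $A_j=\{x\in X:\ |\supp H(x)|=j\}$ for $1\le j\le n$, so that $X=A_1\cup\cdots\cup A_n$. On each $A_j$ the support size is constant, so forgetting the weights gives a continuous map $A_j\to SP^j(P_0(X))$; since $j\mid n!$ for every $j\le n$, the $(n!/j)$-fold diagonal repetition embeds $SP^j\hookrightarrow SP^{n!}$, producing $A_j\to SP^{n!}(P_0(X))$. Evaluation at time $t$ then gives a null-homotopy of $\pa^X_{n!}|_{A_j}:A_j\to SP^{n!}(X)$ (at $t=0$ every path in the support starts at $x$, at $t=1$ every path ends at $x_0$). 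Hence each $\alpha_j$ restricts to zero on $A_j$, lifts to $H^{k_j}(X,A_j;R)$ in Alexander--Spanier cohomology, and the cup product $\alpha_1\smile\cdots\smile\alpha_n$ factors through $H^*(X,\bigcup_j A_j;R)=H^*(X,X;R)=0$. This is exactly the argument mirrored for $n=2$ in Lemma~\ref{imp=2} and Theorem~\ref{catbound} of the present paper. The appearance of $n!$ is therefore explained not by clearing denominators of weights, as you guessed, but by the need to have $j\mid n!$ for all $j\le n$ so that every stratum admits a diagonal embedding into the same symmetric power.
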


Let the map $\Delta_n : SP^{n!}(X) \to SP^{n!}(X^m)$ be induced from $\Delta$ by functoriality. Then the following lower bound is obtained in Alexander--Spanier cohomology.

\begin{theorem}[\protect{\cite{Ja}}]
Suppose that $\beta_i^* \in H^{k_i}(SP^{n!}(X^m); R)$, $1 \le i \le n$, for some ring $R$ and $k_i \ge 1$, are cohomology classes such that $\Delta_n^*(\beta_i^*) = 0$. Let $\beta_i$ be their images under the induced homomorphism $(\pa^{X^m}_{n!})^*$ such that $\beta_1 \smile \cdots \smile \beta_n \neq 0$. Then $\dTC_m(X) \ge n.$
\end{theorem}

From the above theorems and Proposition~\ref{useful}, we conclude that for any space $X$, we have $c\ell_{\F}(X)\le \dcat(X)$ and $zc\ell_{\F}(X)\le \dTC(X)$ for $\F\in\{\Q,\R\}$. We note that for a general ring $R$, $c\ell_{R}(X)$ and $zc\ell_{R}(X)$ are not necessarily the lower bounds to $\dcat(X)$ and $\dTC(X)$, respectively, since $\dcat(\R P^n) = \dTC(\R P^n)=1$.

\subsection{A characterization of $\dcat$ and $\dTC_m$} For a fibration $p:E \to B$, let $E_n(p) = \{ \mu \in B_n(E) \mid \supp(\mu) \subset p^{-1}(x) \text{ for some } x \in B \}$ and $\mathcal{B}_n(p) : E_n(p) \to B$ be the fibration defined as $\mathcal{B}_n(p)(\mu) = x$ when $\mu \in \mathcal{B}_n (p^{-1}(x)).$

Let $p_0:P_0(X) \to X$ and $\pi_m:P(X) \to X^m$ be the fibrations defined as 
$$
p_0(\phi) = \phi(0) \hspace{5mm} \text{and} \hspace{5mm} \pi_m(\psi)= \left(\psi\left(t_1\right), \psi\left(t_2\right), \ldots, \psi\left(t_m\right)\right).
$$
The following characterizations were obtained in~\cite{DJ} and~\cite{Ja}, respectively.
\begin{itemize}
    \item $\dcat(X) < n$ if and only if there is a section to  $\B_n(p_0):P_0(X)_n(p_0) \to X$.
    \item Similarly, $\dTC_m(X) < n$ if and only if there exists a section to the fibration $\B_n(\pi_m):P(X)_n(\pi_m) \to X^m$.
\end{itemize}
In view of these characterizations, $\dcat$ and $\dTC_m$ can be seen as special cases of the general notion of the ``distributional sectional category" of Hurewicz fibrations, see~\cite[Section 5]{Ja} for details.

\subsection{Some preliminaries}\label{pred}
For any $X$, $m \ge 2$, and $a_i \in (1,\infty)$ such that $a_i > a_{i+1}$ for all $1 \le i \le m-2$, let
$$
T_{m}(X) = \left\{\left(f_{1}, \ldots, f_{m} \right) \in (P(X))^{m} \mid f_{i}(1) = f_{i+1}(0) \text{ for all } 1 \le i \le m-1 \right\}
$$
and $\theta_{m}: T_{m}(X) \to P(X)$ be defined as $\theta_{m}\left(f_{1}, \ldots, f_{m} \right) = f_{1} \star \cdots \star f_{m}$, where
$$
\left(f_{1} \star \cdots \star f_{m}\right)(t) = \begin{cases}
    f_{1}(a_{1}t) & : 0 \le t \le \frac{1}{a_{1}} \\
    f_{2}\left(\frac{a_{2}(a_{1}t-1)}{a_{1}-a_{2}}\right) & : \frac{1}{a_{1}} \le t \le \frac{1}{a_{2}}
    \\
    \hspace{6mm} \vdots & \hspace{8mm} \vdots 
    \\
    f_{m-1}\left(\frac{a_{m-1}(a_{m-2}\hspace{0.5mm} t-1)}{a_{m-2}-a_{m-1}}\right) & : \frac{1}{a_{m-2}} \le t \le \frac{1}{a_{m-1}} 
    \\
    f_{m}\left(\frac{1-a_{m-1}\hspace{0.5mm} t}{1-a_{m-1}}\right) & : \frac{1}{a_{m-1}} \le t \le 1
\end{cases}
$$
The proof of the following statement can be found in~\cite[Lemma 2.1]{Ja}.
\begin{lemma}\label{iscont}
    The map $\theta_{m}:T_{m}(X) \to P(X)$ is continuous for each $m \ge 2$.
\end{lemma}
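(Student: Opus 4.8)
The plan is to prove continuity of $\theta_m$ by realizing it as a composition of maps already known to be continuous, rather than checking continuity directly from the piecewise formula. First I would fix the timestamps determined by the $a_i$: set $s_0 = 0$, $s_j = 1/a_j$ for $1 \le j \le m-1$, and $s_m = 1$, so that $0 = s_0 < s_m = 1$ and, since $a_i > a_{i+1}$ on the relevant range and all $a_i \in (1,\infty)$, one checks $s_0 < s_1 < \cdots < s_m = 1$; these are the breakpoints at which the concatenated path $f_1 \star \cdots \star f_m$ switches from one $f_i$ to the next. On the $j$-th subinterval $[s_{j-1}, s_j]$ the value of $(f_1 \star \cdots \star f_m)(t)$ is $f_j(\ell_j(t))$ where $\ell_j : [s_{j-1}, s_j] \to [0,1]$ is the affine reparametrization with $\ell_j(s_{j-1}) = 0$ and $\ell_j(s_j) = 1$ (reading these off the displayed formula).

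Next I would invoke the standard exponential-law / pasting facts for the compact-open topology. Continuity of a map into $P(X) = X^{[0,1]}$ is equivalent to continuity of the adjoint map $T_m(X) \times [0,1] \to X$, $((f_1,\ldots,f_m), t) \mapsto \theta_m(f_1,\ldots,f_m)(t)$. I would verify this adjoint is continuous by a closed-cover (pasting lemma) argument: write $[0,1] = \bigcup_{j=1}^m [s_{j-1}, s_j]$, a finite cover by closed sets, so $T_m(X) \times [0,1]$ is covered by the finitely many closed sets $T_m(X) \times [s_{j-1}, s_j]$. On each such piece the adjoint equals the composition
\[
T_m(X) \times [s_{j-1},s_j] \xrightarrow{\;(\pr_j,\ \ell_j)\;} P(X) \times [0,1] \xrightarrow{\;\mathrm{ev}\;} X,
\]
where $\pr_j$ is projection to the $j$-th coordinate, $\ell_j$ is the affine map above, and $\mathrm{ev}$ is the evaluation map $(g,u) \mapsto g(u)$, which is continuous because $[0,1]$ is locally compact Hausdorff. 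Each factor here is continuous, so the composition is, and the pasting lemma gives continuity of the adjoint on all of $T_m(X) \times [0,1]$ once I check the pieces agree on overlaps — which they do because at each $t = s_j$ both adjacent formulas evaluate to $f_j(1) = f_{j+1}(0)$, exactly the defining condition of $T_m(X)$. Passing back through the adjunction yields continuity of $\theta_m : T_m(X) \to P(X)$.

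I expect the only genuinely delicate point to be the overlap-agreement check in the pasting step, and specifically confirming that the affine parameters $\ell_j$ read off the displayed cases really do send the left endpoint $s_{j-1}$ to $0$ and the right endpoint $s_j$ to $1$ for every $j$ (including the first and last cases, which have a slightly different shape). This is a short computation with the $s_j = 1/a_j$; once it is in hand, the closedness of the sets $T_m(X) \times [s_{j-1}, s_j]$ and the matching boundary values make the pasting lemma apply cleanly. Everything else — continuity of projections, of affine maps, of evaluation on a locally compact domain, and the exponential law for mapping spaces — is standard and requires no computation. As an alternative, one could instead exhibit $\theta_m$ as an iterate of the classical binary concatenation map $P(X) \times_X P(X) \to P(X)$ (with suitably chosen speed parameters) and cite its continuity, but the direct adjoint argument above is the most self-contained route and mirrors the treatment in~\cite[Lemma 2.1]{J}.
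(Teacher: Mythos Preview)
Your proposal is correct: the exponential-law/adjunction reduction to the adjoint $T_m(X)\times[0,1]\to X$, followed by the pasting lemma over the closed cover $\{[s_{j-1},s_j]\}$ with the boundary agreement supplied by the defining condition $f_j(1)=f_{j+1}(0)$, is exactly the standard argument and is valid as written. The paper itself does not give a proof but simply cites \cite[Lemma~2.1]{J}, which (as you note) your argument mirrors, so there is nothing to compare.
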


This construction was used in~\cite[Sections 3 and 5.B]{Ja} to prove various properties for $\dTC_m$. Here, we will use it in Sections~\ref{Intertwining Invariants} and~\ref{Characterizations} for similar purposes.

\section{Resolvable paths}\label{Resolvable Paths}
\subsection{Topological aspects}
For a fixed $n \ge 1$, recall the continuous mapping $\Phi_n: \B_n(P(X)) \to P(\B_n(X))$ defined in the introduction. The notion of resolvable paths was first introduced in~\cite{DJ} as follows.

\begin{definition}\label{resolve}
    A path $f: I \to \B_{n}(X)$ is called \textit{resolvable} if there exists a measure $\mu \in \B_{n}(P(X))$ such that $\Phi_n(\mu) =f$. In this case, $\mu$ is called \textit{a resolver of} $f$.
\end{definition}

Let $RP(\B_n(X)) \subset P(\B_n(X))$ denote the subspace of resolvable paths. 

\begin{remark}\label{basic}
Let $f \in RP(\B_n(X))$ be resolved by $\mu = \sum \lambda_\phi \phi \in \B_n(P(X))$. Let there exist some $t_0 \in I$ and $x \in X$ such that $f(t_0) = \delta_x$. Then
    $$
    \sum \lambda_\phi \phi(t_0)= f(t_0)=\delta_x
    $$
    implies that $\phi(t_0) = x$ for each $\phi \in \supp(\mu)$. 
\end{remark}

In~\cite[Section 6]{Dr}, the definition of resolvable paths was reformulated. The following lemma shows the equivalence of the two definitions.



\begin{lemma}\label{sames}
    If $f\in P(\B_n(X))$, then $f\in RP(\B_n(X))$ if and only if there exists a continuous map $F: I \times \{1,\ldots, n\} \to X$ and a measure $\nu \in \B_n(\{1,\ldots, n\})$ such that for all $t \in I$, $f(t) = \B_n(F)(t,\nu)$, where $\B_n(F): I \times \B_n(\{1,\ldots, n\}) \to \B_n(X)$ is defined using the functoriality of $\B_n$ in the obvious way.  
\end{lemma}

\begin{proof}
    Let us fix some $x_0 \in X$. For brevity, let $J'$ denote the unordered set $\{1,\ldots, n\}$. Let $f \in RP(\B_n(X))$ so that there exists $\mu = \sum_{i \in J}\lambda_{i}\phi_i \in \B_n(P(X))$ for some $J \subset J'$ such that $f(t) = \sum_{i \in J}\lambda_{i}\phi_i(t)$ for each $t \in I$. Let us define the map $F: I \times J \to X$ as $F(t,i) = \phi_i(t)$. When $i \in J'\setminus J$, let $F(t,i) = x_0$ for all $t$. This gives a continuous map $F:I \times J' \to X$. Also, define $\nu = \sum_{i\in J}\lambda_i \hspace{0.4mm} i \in \B_n(J) \subset \B_n(J')$. We see that
    $$
    \B_n(F) \left(t,\nu\right) = \sum_{i\in J}\lambda_i \hspace{0.4mm} F(t,i) = \sum_{i\in J}\lambda_i \hspace{0.4mm} \phi_i(t) = f(t).
    $$    
    Conversely, if there exists a map $F : I \times J' \to X$ and $\nu = \sum_{i\in J}\lambda_i \alpha_i \in \B_n(J')$ for some $J \subset J'$ such that $f(t) =  \sum_{i\in J}\lambda_i F(t,\alpha_i)$, then for each $i \in J$, define the path $\phi_i : I \to X$ as $\phi_i(t) = F(t,\alpha_i)$. Finally, we define $\mu = \sum_{i\in J}\lambda_i\phi_i \in \B_n(P(X))$. Clearly, $\Phi_n(\mu) = f$. Thus, $f \in RP(\B_n(X))$.
\end{proof}

We now study the homotopy type of the space $RP(\B_n(X))$ of resolvable paths.

\begin{lemma}\label{deform}
    For any $n \ge 1$, the space $RP(\B_n(X))$ deforms to $\B_n(X)$.
\end{lemma}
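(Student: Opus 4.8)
The statement is that $\P(\B_n(X))$ deformation retracts onto $\B_n(X)$, where $\B_n(X)$ sits inside $P(\B_n(X))$ as constant paths via $\omega_n$ composed with the inclusion of constants. The natural strategy is the classical one used for $P(Y)\simeq Y$: contract each path to its value at the endpoint $t=1$ (or $t=0$), by the reparametrization deformation $H(f,s)(t) = f((1-s)t + s)$. First I would write down this explicit homotopy $H: \P(\B_n(X)) \times I \to P(\B_n(X))$ on the ambient path space; at $s=0$ it is the identity, at $s=1$ it sends $f$ to the constant path at $f(1)$, and it fixes constant paths throughout. Continuity of $H$ on $P(\B_n(X))$ is standard (compact-open topology).

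The one genuinely new thing to check — and the step I expect to be the main obstacle — is that $H$ keeps us \emph{inside} $\P(\B_n(X))$: i.e., if $f$ is resolvable then so is each reparametrized path $H(f,s)$. So the key step is: given a resolver $\mu = \sum \lambda_\phi \phi \in \B_n(P(X))$ of $f$, produce a resolver of $f_s := H(f,\cdot)(s)$. The obvious candidate is $\mu_s := \sum \lambda_\phi \phi_s$, where $\phi_s(t) = \phi((1-s)t+s)$ is the corresponding reparametrization of the path $\phi$ in $X$. Then for each $t$, $\Phi_n(\mu_s)(t) = \sum \lambda_\phi \phi_s(t) = \sum \lambda_\phi \phi((1-s)t+s) = f((1-s)t+s) = f_s(t)$, using that $\mu$ resolves $f$. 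One must note that $|\supp(\mu_s)| \le |\supp(\mu)| \le n$ (reparametrization cannot increase the number of distinct paths — if anything distinct paths could become equal, only decreasing support), so $\mu_s \in \B_n(P(X))$ and hence $f_s \in \P(\B_n(X))$.

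The remaining routine points: at $s=1$ the path $f_1$ is the constant path at $f(1)=\delta_{?}$ — here I should observe that $f(1) \in \B_n(X)$ is a genuine measure of support $\le n$, not necessarily a Dirac mass, so the deformation lands in the copy of $\B_n(X)$ sitting as constant paths inside $P(\B_n(X))$ (and inside $\P(\B_n(X))$, since constant paths are trivially resolvable by a constant measure). Conversely, if $f$ is already a constant path at some $\mu_0 \in \B_n(X)$, then $H(f,s) = f$ for all $s$, so the homotopy restricts to the identity on $\B_n(X)$, making it a genuine deformation retraction. I would present the homotopy, verify the endpoint and constancy conditions, and then give the resolver computation above as the heart of the argument; the verification that $H$ is continuous into $P(\B_n(X))$ I would state as standard (it is the exponential-law continuity of the reparametrization map $I \times I \to I$, postcomposed appropriately).
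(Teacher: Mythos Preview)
Your proposal is correct and follows essentially the same approach as the paper: both use the standard path-shrinking reparametrization homotopy and verify that resolvability is preserved by reparametrizing each path in a resolver measure. The only cosmetic difference is that the paper contracts each path to its value at $0$ via $H(\psi,t)(s)=\psi(s(1-t))$, whereas you contract to the value at $1$; the resolver check and the observation that constant paths (hence $\B_n(X)$) are fixed are the same in both arguments.
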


\begin{proof}
    Let us define a map $f:\B_n(X) \to P(\B_n(X))$ as $f(\mu) = \phi_\mu$, where $\phi_\mu$ is the trivial loop at $\mu \in \B_n(X)$, i.e., if $\mu = \sum \lambda_\alpha \alpha$, then $\phi_\mu(t) =  \sum \lambda_\alpha \alpha$. For any $x \in X$, let $c_x \in P(X)$ denote the trivial loop at $x$. If $\nu =  \sum \lambda_\alpha c_\alpha \in \B_n(P(X))$, then $\nu$ is a resolver of $\phi_\mu$. So, the image of $f$ is in $RP(\B_n(X))$. Define $g:RP(\B_n(X)) \to \B_n(X)$ to be the evaluation map $g(\psi) = \psi(0)$. Then clearly, $gf = \mathbbm{1}_{\B_n(X)}$. Finally, define a homotopy $H: RP(\B_n(X)) \times I \to P(\B_n(X))$ such that
    $$
    H(\psi,t)(s) = \psi(s(1-t))
    $$
    for all $s,t \in I$. Of course, we get $H(\psi,0)(s) = \psi(s)$ and $H(\psi,1)(s) = \psi(0)$ for all $s \in I$. Now, see that if $\mu = \sum \lambda_\alpha \alpha \in \B_n(P(X))$ is a resolver of $\psi \in RP(\B_n(X))$ and for each $t \in I$, if $\gamma_t:I \to I$ is given by $\gamma_t(s) = s(1-t)$, then the measure $\nu = \sum \lambda_\alpha \hspace{0.3mm}\alpha\gamma_t \in \B_n(P(X))$ is a resolver of $H(\psi,t) \in P(\B_n(X))$. Hence, the image of $H$ is in $RP(\B_n(X))$ and so, $H$ is a homotopy between $\mathbbm{1}_{RP(\B_n(X))}$ and $fg$. 
\end{proof}

\begin{corollary}\label{useonce}
    The space $RP(\B_n(X))$ is homotopy equivalent to $P(\B_n(X))$.
\end{corollary}

\begin{proof}
    We know that for any space $Y$, the path space $P(Y)$ deforms to $Y$. Upon taking $Y = \B_n(X)$ and using Lemma~\ref{deform}, the statement follows.
\end{proof}

For a metric space $(X,d)$ and $n \ge 1$, we define $\exp_n(X)=\{A \subset X \mid |A|\le n\}$ to be the set of all subsets of $X$ having cardinality at most $n$. We equip $\exp_n(X)$ with the topology induced by the Hausdorff metric $d_H$. So, for any $A,B \in \exp_n(X)$, 
$$
d_H(A,B) = \max\left\{ \sup_{a \in A} d(a,B), \hspace{0.5mm }\sup_{b \in B} d(A,b) \right\}.
$$
It is easy to see that $\exp_1(X) = X$,  $\exp_2(X)=SP^2(X)$, and $\exp_n(X) \subset \exp_{n+1}(X)$ for all $n \ge 1$.

Consider a support function $\supp:\B_n(X) \to \exp_n(X)$ that maps each measure $\mu \in \B_n(X)$ to its support $\supp(\mu) \in \exp_n(X)$. For $n \ge 2$, this function need not be continuous. For any $f \in P(\B_n(X))$, the composition $\supp \hspace{0.3mm} f : I \to \exp_n(X)$ can be viewed as the variation of the support of $f$ with respect to the time $t$.

\begin{proposition}
    If $f \in RP(\B_n(X))$, then the composition $\supp f$ is continuous.
\end{proposition}

\begin{proof}
    The continuity of $\supp f$ can be checked by the sequential criterion. Let $\{t_n\}$ be a sequence in $I$ converging to $t$. Let $A = (\supp f)(t) = \supp(f(t))$ and for each $n$, let $A _n = (\supp f)(t_n) = \supp(f(t_n))$. Since $f \in RP(\B_n(X))$, there exists $\mu = \sum \lambda_\phi \phi \in \B_n(P(X))$ such that $f(s) = \sum \lambda_\phi \phi(s)$ for all $s \in I$. Let us fix some $\phi \in \supp(\mu)$. Then $\phi(t_n) \in A_n$ and $\phi(t) \in A$. Thus, for all $n$, 
    $$
    d(A_n,\phi(t)), \hspace{1mm}d(\phi(t_n),A) \le d(\phi(t_n),\phi(t)).
    $$
    Now, choose some $\epsilon > 0$. Since $\phi$ is continuous, there exists some $m_\phi \ge 1$ such that $d(\phi(t_n),\phi(t)) < \frac{\epsilon}{2}$ for all $n\ge m_\phi$. This happens for each fixed $\phi \in \supp(\mu)$. Let $m = \max\{m_\phi \mid \phi \in \supp(\mu)\}$. Then, it follows that for all $n \ge m$,
    $$
    \sup_{\phi \in \supp(\mu)} d(A_n,\phi(t)),\sup_{\phi \in \supp(\mu)} d(\phi(t_n),A) \le \frac{\epsilon}{2}.
    $$
    Note that $A$ is a subset of the multiset $\{\phi(t)\mid\phi \in \supp(\mu)\}$ and $A_n$ is a subset of the multiset $\{\phi(t_n)\mid\phi \in \supp(\mu)\}$ for each $n$. Thus, for each $n$,
    $$
    \sup_{a \in A} d(A_n,a) \le \sup_{\phi \in \supp(\mu)} d(A_n,\phi(t)) \hspace{4mm} \text{and} \hspace{4mm} \sup_{b \in A_n} d(b,A) \le \sup_{\phi \in \supp(\mu)} d(\phi(t_n),A).
    $$
    From this, we conclude that $d_H(A_n,A) \le \frac{\epsilon}{2} < \epsilon$ for all $n \ge m$. So, the sequence $\{A_n\}$ converges to $A$ in $\exp_n(X)$ and hence, the map $\supp f$ is continuous. 
\end{proof}

In other words, the support of a resolvable path varies continuously with time.

\subsection{Some examples}\label{Paths2}
Let us look at various explicit examples of the resolvers of some resolvable paths in $RP(\B_n(X))$ for $n=2,3,$ and $4$. Through these examples, we will not only see that $\Phi_n$ is not injective but also understand that resolvers of a given resolvable path may have wildly different characteristics.

For any two paths $\phi_1,\phi_2\in P(X)$, if there exists some $t_1\in(0,1)$ such that $\phi_1(t_1) = \phi_2(t_1)$, then we define a path $(\phi_1,\phi_2) \in P(X)$ by pasting such that $(\phi_1,\phi_2)(t) = \phi_1(t)$ when $t \in [0,t_1]$ and $(\phi_1,\phi_2)(t) = \phi_2(t)$ when $t\in [t_1,1]$. We note that the definition of $(\phi_1,\phi_2)$ depends on the choice of $t_1$.

\begin{example}\label{1}
    Consider a resolvable path $f:I \to \B_2(X)$ described as follows.
At $t =0$, the path breaks into two strings of weights $\frac{1}{2}$ and $\frac{1}{2}$. At $t=t_1$, the strings intertwine and break into two strings of weights $\frac{1}{2}$ and $\frac{1}{2}$. Then the strings reassemble at $t=1$. Let $\mu = \frac{1}{2}\alpha + \frac{1}{2}\beta$ be a resolver with $\alpha(t_1) = \beta(t_1)$ such that $f(t) = \frac{1}{2}\alpha(t) + \frac{1}{2}\beta(t)$. Then, $\nu = \frac{1}{2}(\alpha,\beta) +\frac{1}{2}(\beta,\alpha)$ is another resolver of $f$.
\end{example}

\begin{example}\label{2}
    Consider a path $g$, similar to the one described in Example~\ref{1}, the weights of whose two strings change from $\frac{1}{2}$, $\frac{1}{2}$ to $\frac{1}{4}$, $\frac{3}{4}$. This path cannot have a resolver in $\B_2(P(X))$. Let $\mu = \frac{1}{4}\alpha + \frac{1}{4}(\alpha,\gamma) + \frac{1}{2}\gamma$ be a resolver of $g$, where $\alpha(t_1) = \gamma(t_1)$. Then clearly, $\nu = \frac{1}{2}(\alpha,\gamma)+\frac{1}{4}(\gamma,\alpha)+\frac{1}{4}\gamma$ is another resolver of $g$.
\end{example}

For three paths $\phi_1,\phi_2,\phi_3\in P(X)$ such that $\phi_1(t_1) = \phi_2(t_1)$ and $\phi_2(t_2) = \phi_3(t_2)$ where $0<t_1 < t_2<1$, we define a path $(\phi_1,\phi_2,\phi_3) \in P(X)$ by pasting in a similar way as above such that $(\phi_1,\phi_2,\phi_3)(t) = \phi_i(t)$ if $t \in [t_{i-1},t_i]$ for $i=1,2,3$, where we have $t_0=0$ and $t_3=1$. Again, $(\phi_1,\phi_2,\phi_3)$ depends on the choice of $t_1$ and $t_2$.

\begin{example}\label{3}
Let $h:I \to \B_4(X)$ be a resolvable path described as follows. At $t=0$, the path breaks into three strings of weights $\frac{1}{3}$ each. At $t=t_1$, the strings intertwine and break into three strings of weights $\frac{1}{2}$, $\frac{1}{3}$, and $\frac{1}{6}$. Then the strings reassemble at $t=1$. This path cannot have a resolver in $\B_3(P(X))$. Let $\mu = \frac{1}{3}\alpha + \frac{1}{6}(\gamma, \alpha) + \frac{1}{6}\gamma + \frac{1}{3}\delta \in \B_4(P(X))$ be a resolver of $h$, where $\gamma(t_1)=\alpha(t_1)$. Then, three more resolvers of $h$ in $\B_4(P(X))$ are: $\frac{1}{3}(\gamma,\alpha)+\frac{1}{6}\alpha+\frac{1}{6}(\alpha,\gamma) + \frac{1}{3}\delta$, $\frac{1}{3}(\delta,\alpha)+\frac{1}{6}\gamma+\frac{1}{6}(\gamma,\alpha) + \frac{1}{3}(\alpha,\delta)$, $\frac{1}{3}(\delta,\alpha)+\frac{1}{6}(\alpha,\gamma)+\frac{1}{6}\alpha+ \frac{1}{3}(\gamma,\delta)$.
\end{example}

Given a resolvable path $\psi$, let us call a point $t_0 \in (0,1)$ a \textit{branching point} of $\psi$ if there exists an $\epsilon>0$ such that two or more strings in $(t_0-\epsilon,t_0)$ intertwine at $t_0$ or a string breaks at $t_0$ into two or more strings in $(t_0,t_0+\epsilon)$.

\begin{example}\label{4}
    We modify the path $h$ above by adding another branching point. Let $\psi:I \to \B_4(X)$ be a resolvable path described as follows. At $t=0$, the path breaks into three strings of weights $\frac{1}{3}$ each. At $t=t_1>0$, the strings intertwine and break into three strings of weights $\frac{1}{2}$, $\frac{1}{3}$, and $\frac{1}{6}$. At $t=t_2>t_1$, the strings intertwine and break into two strings of weights $\frac{2}{3}$ and $\frac{1}{3}$. Then the strings reassemble at $t=1$. Again, this path cannot have a resolver in $\B_3(P(X))$. Let $\mu = \frac{1}{3}\alpha + \frac{1}{6}(\gamma,\alpha, \alpha) + \frac{1}{6}(\gamma,\gamma,\alpha) + \frac{1}{3}\delta$ be a resolver of $\psi$, where $\gamma(t_i)=\alpha(t_i)$ for $i=1,2$. It is easy to find at least $11$ other resolvers of $\psi$ in $\B_4(P(X))$.
\end{example}

From Examples~\ref{3} and~\ref{4}, we see that increasing the number of branching points of a resolvable path by one can rapidly increase the number of its resolvers. 

\begin{example}
    Consider the resolvable path $f:I \to B_4(X)$ described in Example~\ref{1}. Certainly, we have two of its resolvers $\mu,\nu\in\B_4(P(X))$ defined above. Another resolver $\delta \in \B_4(P(X))$ is defined as follows: $\delta = \frac{1}{4}\alpha + \frac{1}{4}\beta + \frac{1}{4}(\alpha,\beta)+ \frac{1}{4}(\beta,\alpha)$. We note that in $\B_3(P(X))$, however, $f$ has exactly two resolvers. 
\end{example}

So, even if we keep the number of branching points of a resolvable path fixed, its number of resolvers can increase as we increase the value of the maximum possible size $n$ of the support of its resolver measures. We also note that $f$ is a resolvable path whose resolvers have supports of different sizes. Indeed, $|\supp(\mu)|=2=|\supp(\nu)|$ and $|\supp(\delta)|=4$.


\section{The intertwining invariants}\label{Intertwining Invariants}
\subsection{Sequential intertwining complexities} For $\ov{x} = (x_1,\ldots, x_m) \in X^m$, recall that $P(\delta_{x_1},\ldots,\delta_{x_m}) = \{ f \in P(\B_n(X)) \mid f(t_i) = \delta_{x_i} \text{ for all } i\}$.
\begin{definition}
    An \textit{$n$-intertwined $m$-navigation algorithm} on a space $X$ is a map 
    $$
    s_m : X^m \to RP(\B_n(X))
    $$
    satisfying $s_m(\ov{x}) \in P(\delta_{x_1}, \ldots, \delta_{x_m})$
    for each $\ov{x}\in X^m$.
\end{definition}

In view of Remark~\ref{basic}, it is easy to see that the path $s_m(\ov{x}) \in RP(\B_n(X))$ is resolved by measures in $\B_n(P(\ov{x}))$.

\begin{definition}
    The $m^{th}$ \textit{sequential intertwining topological complexity}, $\iTC_m(X)$, of a space $X$ is the minimal integer $n$ such that $X$ admits an $(n+1)$-intertwined $m$-navigation algorithm.
\end{definition}

It is clear that $\iTC_m(X) = 0$ for some $m \ge 2$ if and only if $X$ is contractible.

When $m = 2$, $\iTC_2(X)$ is denoted by $\iTC$  and simply called the intertwining topological complexity of $X$,~\cite{DJ}.

We now explicitly prove some basic properties for $\iTC_m$ that are very similar to the properties of $\dTC_m$ (see~\cite[Section 3]{Ja}) and $\TC_m$ (see~\cite[Section 3]{Rud} and~\cite[Section 3]{BGRT}). The proofs are partly inspired by the corresponding proofs for $\dTC_m$ obtained in~\cite[Section 3]{Ja}. 

\begin{proposition}\label{domin}
    If $f:X \to Y$ is a homotopy domination, then for each $m \ge 2$, $\iTC_m(Y) \le \iTC_m(X)$.
\end{proposition}
\begin{proof}
    Since $f:X \to Y$ is a homotopy domination, there exists a continuous map $g:Y \to X$ such that $fg \simeq \mathbbm1_Y$. Let $\iTC_m(X) = n-1$. Then there exists a $n$-intertwined $m$-navigation algorithm $s_m:X^m \to RP(\B_n(X))$. Let $g^m:Y^m \to X^m$ be the product map and let $f_*: \B_n(X) \to \B_n(Y)$ and $\wt{f} : P(\B_n(X)) \to P(\B_n(Y))$ be induced by $f$ due to functoriality. Consider the map
    $$
    \wt{f} s_m \hspace{0.3mm}g^m :Y^m \to P(\B_n(Y)).
    $$
    Let $h:Y \to P(Y)$ be a homotopy such that $h(y) = h_y$ for each $y \in Y$, where $h_y(0) = y$ and $h_y(1) = fg(y)$. We write $H_y = \omega_n h_y:I \to \B_n(Y)$ as a path in $\B_n(Y)$ from $\delta_y$ to $\delta_{fg(y)}$, where we recall that $\omega_n:Y \hookrightarrow \B_n(Y)$ is the Dirac inclusion. For each $1 \le i \le m-1$, let $k_i: I \to I$ be the map
    $$
    k_i(s) = \frac{s+i-1}{m-1}.
    $$
    For a fixed $\ov{y} \in Y^m$, let $s_m \hspace{0.3mm}g^m(\ov{y}) = \psi$ and $\psi_i=\psi k_i$. In the notations of Section~\ref{pred}, using the map $\theta_{m-1}:T_{m-1}(\B_n(Y)) \to P(\B_n(Y))$ with $a_{i} = 1/t_{i+1} = (m-1)/i$ for all $1 \le i \le m-2$, we define a map $\sigma:Y^m \to P(\B_n(Y))$ as
    $$
\sigma(\ov{y}) = \left(H_{y_1} \cdot f_*\psi_1 \cdot \ov{H}_{y_2}\right) \star  \left(H_{y_2} \cdot f_*\psi_2 \cdot \ov{H}_{y_3}\right) \star \cdots \star \left(H_{y_{m-1}} \cdot f_*\psi_{m-1} \cdot \ov{H}_{y_m}\right).
$$
Here, $\cdot$ denotes the operation of the usual concatenation of paths. Continuity of $\sigma$ follows from the continuity of $\theta_{m-1}$ in Lemma~\ref{iscont}.
Clearly, $\sigma(\ov{y}) \in P(\delta_{y_1}, \ldots,\delta_{y_m})$. We now show that $\sigma(\ov{y})$ is resolvable. Let $\psi \in RP(\B_n(X))$ be resolved by $\mu = \sum \lambda_{\alpha} \alpha$. Thus, $\alpha(t_i) = g(y_i)$ for all $i$. As before, define
$$
\wt{\alpha} = \left(h_{y_1} \cdot f\alpha k_1 \cdot \ov{h}_{y_2}\right) \star  \left(h_{y_2} \cdot f\alpha k_2 \cdot \ov{h}_{y_3}\right) \star \cdots \star \left(h_{y_{m-1}} \cdot f\alpha k_{m-1} \cdot \ov{h}_{y_m}\right).
$$
Then the measure $\nu = \sum \lambda_{\alpha} \wt{\alpha} \in \B_n(P(Y))$ is a resolver of $\sigma(\ov{y})$. So, $\sigma$ is an $n$-intertwined $m$-navigation algorithm on $Y$. Thus, $\iTC_m(Y) \le n-1$.
\end{proof}
\begin{corollary}\label{hinv}
    $\iTC_m$ is a homotopy invariant of spaces for each $m \ge 2$.
\end{corollary}
\begin{proof}
    Let $X\simeq Y$. Then there exist continuous maps $f:X \to Y$ and $g:Y \to X$ such that $fg \simeq \mathbbm{1}_Y$ and $gf \simeq \mathbbm{1}_X$. Since $f$ and $g$ are homotopy dominations of each other, the conclusion follows from Proposition~\ref{domin}.
\end{proof}

\begin{corollary}\label{easyyy}
    $\max\{\iTC_m(X),\iTC_m(Y)\} \le \min\{\iTC_m(X \vee Y),\iTC_m(X \times Y)\}$ for all $m \ge 2$ and spaces $X$ and $Y$.
\end{corollary}

\begin{proof}
    Let us fix $z_0$ as the wedge basepoint of $X \vee Y$. Let $r_X:X \vee Y \to X$ and $r_Y:X \vee Y \to Y$ be the collapsing maps such that $r_X(Y) = \{z_0\}$ and $r_Y(X) = \{z_0\}$. Let $\iota_X:X \hookrightarrow X \vee Y$ and $\iota_Y:Y \hookrightarrow X \vee Y$ be the inclusions. Then, $r_X$ and $r_Y$ are homotopy dominations of $\iota_X$ and $\iota_Y$, respectively. Hence, we get from Proposition~\ref{domin} that $\iTC_m(X)\le \iTC_m(X \vee Y)$ and $\iTC_m(Y)\le \iTC_m(X \vee Y)$. Next, let $\proj_X:X \times Y \to X$ and $\proj_Y:X \times Y \to Y$ be the projection maps. Fix some $a \in X$ and $b \in Y$ and let $\iota'_X:X \hookrightarrow X \times Y$ and $\iota'_Y:Y \hookrightarrow X \times Y$ be the inclusions $\iota'_X(x)= (x,b)$ and $\iota'_Y(y)= (a,y)$. Then, $\proj_X$ and $\proj_Y$ are homotopy dominations of $\iota'_X$ and $\iota'_Y$, respectively. The inequalities $\iTC_m(X)\le \iTC_m(X \times Y)$ and $\iTC_m(Y)\le \iTC_m(X \times Y)$ then follow from Proposition~\ref{domin}.
\end{proof}

Corollary~\ref{easyyy} can also be proven independently, without using Proposition~\ref{domin}, by using ideas from~\cite[Proposition 3.8]{Ja}.

\begin{proposition}\label{ine1}
    For any $m \ge 2$, $\iTC_m(X) \le \dTC_m(X) \le \TC_m(X)$.
\end{proposition}
\begin{proof}
    Let $\dTC_m(X) = n -1$. Then there exists a map $k_m:X^m \to \B_n(P(X))$ such that $k_m(\ov{x}) \in \B_n(P(\ov{x}))$. Define $s_m = \Phi_n \hspace{0.2mm} k_m :X^m \to P(\B_n(X))$. By definition, $s_m(\ov{x}) \in RP(\B_n(X))$. Since $\Phi_n$ maps $\B_n(P(\ov{x}))$ to $P(\delta_{x_1}, \ldots, \delta_{x_m})$, the map $s_m$ is an $n$-intertwined $m$-navigation algorithm. Hence, $\iTC_{m}(X) \le n-1 = \dTC_m(X)$. The inequality $\dTC_m(X) \le \TC_m(X)$ is well-known, see~\cite{DJ},~\cite{Ja}.
\end{proof}

\begin{corollary}\label{vvobv}
    If $\dTC_m(X) = 1$ for any $m \ge 2$, then $\iTC_m(X) = 1$. 
\end{corollary}

\begin{example}
    $\iTC(\R P^n) = 1$ and $\iTC_m(\R P^n) \le 2m+1$  for all $n \ge 1$ and $m \ge 2$. 
\end{example}

\begin{remark}\label{obv}
In the classical theory, it is known due to~\cite[Corollary 3.5]{GLO1} that $\TC(X) = 1 \iff X \simeq S^{2k-1}$ for some $k$. As noted in~\cite[Section 3.3]{DJ}, this does not remain valid for $\dTC$ due to the example of $\R P^n$ for $n \ge 2$. For the same reason, this also does not hold in the case of $\iTC$. Furthermore, as noted in~\cite[Remark 3.12]{Ja} for $\dTC_m$, the conclusion of~\cite[Theorem 2.1]{FO} from the classical theory of $\TC_m$ does not hold in the theory of $\iTC_m$ as well because $\iTC_m(\R P^\infty) \le 2m+1$ whereas the cohomological dimension of any finite group is infinite.
\end{remark}

\subsection{Intertwining category} For a fixed basepoint $x _0 \in X$, let us define a subspace $RP_0(\B_k(X))  = \{f \in RP(\B_k(X)) \mid 
f(1) = \delta_{x_0} \}$. Then in the same spirit as $\iTC_m$, an analogous intertwining version of the LS-category can be defined.

\begin{definition}[\protect{\cite{DJ,Dr}}]
    The \textit{intertwining Lusternik--Schnirelmann category}, $\icat(X)$, of a space $X$ is the minimal integer $n$ such that there exists a continuous map $H : X \to RP_0(\B_{n+1}(X))$ satisfying $H(x)(0) = \delta_x$ for all $x \in X$.
\end{definition}

Of course, $H(x)(1) = \delta_{x_0}$ and $H(x)$ is resolved by measures in $\B_n(P(x,x_0))$.

Each of the above results for $\iTC_m$ holds for $\icat$ as well and the proofs are just minor modifications of the respective proofs for $\iTC_m$.

\begin{proposition}\label{simple}
    \begin{enumerate}
        \itemsep-0.25em 
\item $\icat(X) \le \dcat(X) \le \cat(X)$.
    \item If $f:X \to Y$ is a homotopy domination, then $\icat(Y) \le \icat(X)$.
        \item $\icat$ is a homotopy invariant of spaces. 
        \item $\max\{\icat(X),\icat(Y)\} \le \min\{\icat(X \vee Y),  \icat(X \times Y)\}$.
    \end{enumerate}
\end{proposition}

By definition, $\icat(X) = 0$ if and only if $X$ is contractible. Therefore, by Proposition~\ref{simple} (1), $\dcat(X) = 1 \implies \icat(X) = 1$. 

\begin{example}
    For any $n \ge 1$, $\icat(S^n) = 1 = \icat(\R P^n)$.
\end{example}

\begin{remark}\label{newnewnew}
    It follows from Proposition~\ref{simple} (1) and~\cite[Propositions 3.5 and 3.9]{Ja} that
    \[
    \icat(\R P^k \times \R P^k) \le \dcat(\R P^k \times \R P^k) \le \dTC_3(\R P^k) \le 3
    \]
    for any $k \ge 1$. For a path-connected CW complex $X$,~\cite[Proposition 2.2]{GLO1} says that if $\pi_1(X)$ has non-trivial torsion, then $\cat(X^n) \ge 2n$. Since $\pi_1(\R P^k) = \Z_2$, $\icat(\R P^k)$ and $\dcat(\R P^k)$ are \textit{not} at least $2$, and $\icat((\R P^k)^2)$ and $\dcat((\R P^k)^2)$ are \textit{not} at least $4$, we conclude that an analog of~\cite[Proposition 2.2]{GLO1} does not hold in the case of $\icat$ or $\dcat$. 
\end{remark}

\begin{lemma}[\protect{\cite[Proposition 3.3]{KW}}]\label{notmine}
    If $p:E \to X$ is a degree $k$ covering map, then the map $p^*:X \to \B_k(E)$ defined as
    $$
    p^*(x) = \frac{1}{k}\hspace{0.6mm} \sum_{\wt{x} \in p^{-1}(x)} \delta_{\wt{x}}
    $$
    is continuous and $p_* p^* = \omega_k : X \to \B_k(X)$, where $p_*: \B_k(E) \to \B_k(X)$.
\end{lemma}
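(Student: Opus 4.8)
The plan is to check, in order, that $p^*$ is well-defined, that it is continuous, and that $p_*p^* = \omega_k$. The first point is immediate: since $p$ is a degree $k$ covering, each fibre $p^{-1}(x)$ consists of exactly $k$ distinct points of $E$, so $\frac1k\sum_{\wt x\in p^{-1}(x)}\delta_{\wt x}$ is a probability measure supported on $k$ points, hence an element of $\B_k(E)$.

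For continuity, I would argue locally using the local triviality of the covering. Fix $x_0\in X$ and choose an evenly covered open neighbourhood $U$ of $x_0$, so that $p^{-1}(U)=\bigsqcup_{j=1}^{k}V_j$ with each $p|_{V_j}:V_j\to U$ a homeomorphism; write $s_j:U\to V_j\subset E$ for the inverse section, which is continuous. Then $p^{-1}(x)=\{s_1(x),\ldots,s_k(x)\}$ for all $x\in U$, so $p^*|_U$ is the composite of the continuous map $U\to E^k$, $x\mapsto(s_1(x),\ldots,s_k(x))$, with the averaging map $a_k:E^k\to\B_k(E)$, $(e_1,\ldots,e_k)\mapsto\frac1k\sum_{j=1}^k\delta_{e_j}$. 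So it remains to see that $a_k$ is continuous, which is the one step requiring a little care. In the metric topology $\mathscr{T}_2$ it follows because $e_j^{(n)}\to e_j$ for each $j$ forces $\frac1k\sum_j\delta_{e_j^{(n)}}\to\frac1k\sum_j\delta_{e_j}$ weakly, hence in the L\'evy--Prokhorov metric; in the quotient topology $\mathscr{T}_1$ it follows because $a_k$ factors through the obviously continuous chain $E^k\to{*}^k E\to\text{Sym}({*}^k E)\to\B_k(E)$. Since the evenly covered neighbourhoods cover $X$, $p^*$ is continuous.

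Finally, functoriality of $\B_k$ gives $p_*\big(\sum\lambda_i\delta_{e_i}\big)=\sum\lambda_i\delta_{p(e_i)}$, and since $p(\wt x)=x$ for every $\wt x\in p^{-1}(x)$ we obtain
\[
p_*p^*(x)=p_*\Big(\tfrac1k\sum_{\wt x\in p^{-1}(x)}\delta_{\wt x}\Big)=\tfrac1k\sum_{\wt x\in p^{-1}(x)}\delta_x=\delta_x=\omega_k(x),
\]
using the collapse $\frac1k\delta_x+\cdots+\frac1k\delta_x=\delta_x$ in $\B_k(X)$. The main (mild) obstacle is thus only the continuity of the averaging map $a_k$; everything else is bookkeeping with covering-space sections and the functoriality of $\B_k$.
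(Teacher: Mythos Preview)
The paper does not give its own proof of this lemma; it is quoted verbatim from \cite[Proposition 3.3]{KW} (note the label \texttt{notmine}) and used as a black box in the proof of Proposition~\ref{ine3}. So there is nothing to compare against.

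Your argument is correct and self-contained. The three steps are exactly the right ones: well-definedness from the fibre having $k$ distinct points, continuity via the local sections $s_j$ on an evenly covered neighbourhood together with the continuity of the averaging map $a_k:E^k\to\B_k(E)$, and the identity $p_*p^*=\omega_k$ by a direct computation using the collapse rule in $\B_k(X)$. Your treatment of $a_k$ in both topologies $\mathscr{T}_1$ and $\mathscr{T}_2$ is the only non-trivial ingredient, and both justifications you give are sound (in $\mathscr{T}_1$ the factorisation through $*^kE\to\text{Sym}(*^kE)\to\B_k(E)$ at barycentre $\tfrac1k\sum e_j$ is continuous by definition of the quotient topology; in $\mathscr{T}_2$ weak convergence of the uniform measures is immediate from pointwise convergence of the supports).
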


The following result is inspired by~\cite[Proposition 6.5]{KW}.

\begin{proposition}\label{ine3}
    If $p:E \to X$ is a degree $k$ covering map, then for all $m \ge 2$, $\iTC_m(X) \le km(\iTC_m(E) + 1) - 1$ and $\icat(X) \le k(\icat(E) + 1) - 1$.
\end{proposition}

\begin{proof}
    First, we prove this statement for $\icat$. Let $\icat(E) = n-1$ so that there exists a map $H: E \to RP_0(\B_n(E))$. This induces $H_*:\B_k(E) \to \B_k(RP_0(\B_n(E)))$. Furthermore, we have our usual map $\Phi_k: \B_k(RP_0(\B_n(E))) \to P_0(\B_k(\B_n(E)))$. But $\B_k(\B_n(E))=\B_{nk}(E)$. So, $\Phi_k: \B_k(RP_0(\B_n(E))) \to P_0(\B_{nk}(E))$. Also, $p$ induces $\wt{p}:P(\B_{nk}(E)) \to P(\B_{nk}(X))$. Finally, define
    $$
    \sigma= \wt{p} \hspace{0.4mm}\Phi_k \hspace{0.4mm} H_* \hspace{0.3mm} p^*:X \to P(\B_{nk}(X)).
    $$
    This composition is continuous because $p^*$ is continuous due to Lemma~\ref{notmine}. By definition, $\sigma(x)(0) =\delta_x \in \B_{nk}(X)$ and $\sigma(x)(1) = \delta_{x_0} \in \B_{nk}(X)$. Since $H(x)$ is resolvable, $H_*$ is induced by $H$, and $\sigma$ factors through $\Phi_k$, it follows that $\sigma(x)$ is resolvable. Therefore, $\icat(X) \le kn-1$.

    To prove this for $\iTC_m$ for a fixed $m \ge 2$, we first observe that the product map $p_m:E^m \to X^m$ defined as $p_m(\ov{x}) = (p(e_1),\ldots,p(e_m))$ is a covering map of degree $km$. Thus, the induced map $p_m^*:X^m \to \B_{km}(E^m)$ is continuous by Lemma~\ref{notmine}. Now, letting $\iTC_m(E) = q-1$, there exists a $q$-intertwined $m$-navigation algorithm $s:E^m \to RP(\B_q(E))$ that induces by functoriality a map $s_*: \B_{km}(E^m) \to \B_{km}(RP(\B_q(E)))$. As before, we form the composition 
    $$
    \sigma_m = \wt{p} \hspace{0.4mm}\Phi_k \hspace{0.4mm} s_* \hspace{0.3mm} p_m^*:X^m \to P(\B_{qkm}(X)).
    $$
    It follows by construction that $\sigma_m(\ov{x}) \in P(\delta_{x_1},\ldots,\delta_{x_m})$ for each $\ov{x} \in X^m$ and the image of $\sigma_m$ is indeed in $RP(\B_{qk}(X))$. Therefore, $\iTC_m(X) \le kmq-1$. 
\end{proof}

\subsection{Relations between $\icat$ and $\iTC_m$} The following statements show that $\iTC_m(X)$ relates with the $\icat$ of products of $X$ in \textit{mostly} the same way as $\dTC_m(X)$ (resp. $\TC_m(X)$) relates with the $\dcat$ (resp. $\cat$) of products of $X$,~\cite{Ja},~\cite{Rud},~\cite{BGRT}.

\begin{proposition}\label{ine5}
    For any space $X$, $\icat(X)\le \iTC(X)$.
\end{proposition}

\begin{proof}
    Let $\iTC(X) = n-1$. Then there exists a $n$-intertwined navigation algorithm $s:X \times X \to RP(\B_n(X))$. For a fixed basepoint $x_0 \in X$, let $J:X \to X \times X$ be the inclusion $J(x)= (x,x_0)$. Taking the composition $sJ:X \to RP(\B_n(X))$, we see that $sJ(x)(0) = \delta_x$ and $sJ(x)(1) = \delta_{x_0}$ for all $x \in X$. Hence, $\icat(X) \le n -1$. 
\end{proof}

\begin{remark}
    We do not know if the inequality $\icat(X^{m-1}) \le \iTC_m(X)$ holds for $m \ge 3$. The proof of Proposition~\ref{ine5} for $m=2$ does not extend to $m \ge 3$ in analogy with the proof of~\cite[Proposition 3.5]{Ja}: given $\ov{x}=(x_1,\ldots,x_{m-1})\in X^{m-1}$ and basepoint $x_0 \in X$, we do not how to obtain, in a continuous way, a resolvable path in $P(\delta_{\ov{x}},\delta_{\ov{x_0}})\subset P(\B_n(X^{m-1}))$ from a given resolvable path in $P(\delta_{x_1},\ldots,\delta_{x_{m-1}},\delta_{x_0})\subset P(\B_n(X))$. One reason for this is that when $n \ge 2$ and $m \ge 3$, there is no non-trivial continuous map from $P((\B_n(X))^{m-1})$ to $P(\B_n(X^{m-1}))$ that preserves resolvability.
\end{remark}

\begin{proposition}\label{ine4}
    For any $m \ge 2$, $\iTC_m(X) \le \icat(X^m)$. 
\end{proposition}

\begin{proof}
    Let $\icat(X^m) =n-1$. There exists a map $H:X^m \to RP_0(\B_n(X^m))$ with $H(\ov{x})(0) = \delta_{\ov{x}}$ and $H(\ov{x})(1) = \delta_{\ov{x_0}}$ for all $\ov{x} \in X^m$ and a fixed basepoint $\ov{x_0} = (x_1^0,\ldots,x_m^0) \in X^m$. Let $\proj_i:X^m \to X$ be the projection onto the $i^{th}$ coordinate that induces the map $(\proj_i)_*:\B_n(X^m) \to \B_n(X)$. Let $H(\ov{x}) = \phi$, and define $\phi_i = (\proj_i)_*\phi$ and $\ov{\phi_i} = (\proj_i)_* \ov{\phi}$, where $\ov{\phi}$ is the reverse path of $\phi$. For each $i$, let $\gamma_i \in P(X)$ be a path such that $\gamma_i(0) = x_i^0$ and $\gamma_i(1) = x_{i+1}^0$. This gives $\gamma_i^* = \omega_n \gamma_i \in P(\B_n(X))$. Then using Lemma~\ref{iscont}, we define a continuous map $\sigma:X^m \to P(\B_n(X))$ as
    $$
    \sigma(\ov{x}) = \left(\phi_1 \cdot \gamma_1^* \cdot \ov{\phi}_2 \right) \star \left(\phi_2 \cdot \gamma_2^* \cdot \ov{\phi}_3 \right) \star \cdots \star \left(\phi_{m-1} \cdot \gamma_{m-1}^* \cdot \ov{\phi}_m \right).
    $$
Clearly, $\sigma(\ov{x}) \in P(\delta_{x_1}, \ldots, \delta_{x_m})$. We still need to verify the resolvability of $\sigma(\ov{x})$. Let $\mu = \sum \lambda_\alpha \alpha \in \B_n(P(X^m))$ be a resolver of $\phi \in RP_0(\B_n(X^m))$. Let $\alpha_i = \proj_i \alpha$ and $\ov{\alpha_i} = \proj_i \ov{\alpha}$, where $\ov{\alpha}$ is the reverse path of $\alpha$. As before, we define a joined path
$$
\wt{\alpha} = \left(\alpha_1 \cdot \gamma_1 \cdot \ov{\alpha}_2 \right) \star \left(\alpha_2 \cdot \gamma_2 \cdot \ov{\alpha}_3 \right) \star \cdots \star \left(\alpha_{m-1} \cdot \gamma_{m-1} \cdot \ov{\alpha}_m \right).
$$
Then the measure $\nu = \sum \lambda_\alpha \wt{\alpha} \in \B_n(P(X))$ is a resolver of $\sigma(\ov{x})$. Hence, $\sigma$ defines an $n$-intertwined $m$-navigation algorithm on $X$ and so, $\iTC_m(X) \le n-1$.
\end{proof}

\begin{proposition}\label{ineq6}
    For each $m \ge 2$, $\iTC_m(X) \le \iTC_{m+1}(X)$.
\end{proposition}

\begin{proof}
    Let $\iTC_{m+1}(X) = n-1$. Then there exists an $n$-intertwined $(m+1)$-navigation algorithm $s:X^{m+1} \to RP(\B_n(X))$. Let us fix some $x_0 \in X$ and define a map $J:X^m \to X^{m+1}$ such that
    \[
    J(x_1,\ldots,x_m) = (x_0,x_1,\ldots,x_m)
    \]
    for all $\ov{x} = (x_1,\ldots,x_m) \in X^m$. Let $sJ(\ov{x}) = \phi$. Then $\phi((i-1)/m) = \delta_{x_{i-1}}$ for all $2 \le i \le m+1$ and $\phi(0) = \delta_{x_0}$. In other words, $\phi(i/m) = \delta_{x_i}$ for all $1 \le i \le m$. Let $H:I \to I$ be the map
    \[
    H(s) = \frac{(m-1)s+1}{m}.
    \]
    Then for $\phi' = \phi H: I \to \B_n(X)$, we get $\phi'((i-1)/(m-1)) = \delta_{x_i}$ for all $1 \le i \le m$. Hence, we define a continuous map $K: X^m \to P(\B_n(X))$ such that $K(\ov{x}) = \phi'$. If $\mu = \sum \lambda_\alpha \alpha \in \B_n(P(X))$ is a resolver of $\phi$, then for $\alpha' = \alpha H$, the measure $\nu = \sum \lambda_\alpha \alpha' \in \B_n(P(X))$ is a resolver of $\phi'$. Hence, $K$ defines an $n$-intertwined $m$-navigation algorithm on $X$. Therefore, $\iTC_m(X) \le n-1$.
\end{proof}

Just like in the classical setting where the equality $\cat(X) = \TC(X)$ holds~\cite[Lemma 8.2]{Far2} for path-connected topological groups $X$, in the distributional setting also, the equality $\dcat(X) = \dTC(X)$ holds, see~\cite[Theorem 3.15]{DJ}. We now prove that such an equality holds for the intertwining invariants as well.

\begin{proposition}\label{topgrp}
    If $X$ is a path-connected topological group, then for all $m \ge 1$, we have $\iTC_{m+1}(X) \le \icat(X^m)$.
\end{proposition}

\begin{proof}
    Let $\icat(X^m) = n-1$ and $\ov{e} = (e,\ldots,e)\in X^m$ be the basepoint, where $e$ is the identity. This can be chosen as the basepoint since $X$ is path-connected. There exists a map $H: X^m \to RP_0(\B_n(X^m))$ satisfying $H(\ov{x})(0) = \delta_{\ov{x}}$ and $H(\ov{x})(1) = \delta_{\ov{e}}$ for each $\ov{x} \in X^m$. Since $X$ is a topological group, the map $J_m:X^{m+1} \to X^m$ defined as
    $$
    J_m\left(x_1,x_2,\ldots,x_{m+1}\right)= \left(x_2^{-1}x_1,x_3^{-1}x_2,\ldots,x_{m+1}^{-1}x_m\right),
    $$
    is continuous. Here, $x_i^{-1} \in X$ denotes the group-theoretic inverse of $x_i \in X$ and $x_{i+1}^{-1}x_i \in X$ is the group-theoretic product under the product map $\tau:X \times X \to X$. Let $HJ_m(x_1,\ldots,x_{m+1}) = \phi \in RP_0(\B_n(X^m))$. Using projections $\proj_i:X^{m} \to X$, we get induced maps $(\proj_i)_*:\B_n(X^{m}) \to \B_n(X)$ and paths $\phi_i = (\proj_i)_* \phi$ in $P_0(\B_n(X))$ that start at $\delta_{x_{i+1}^{-1}x_i}$ and end at $\delta_{e}$ for each $1 \le i \le m$. The map $\tau$ induces by functoriality of $\B_n$ a map $\mathcal{M}:X \times \B_n(X) \to \B_n(X)$ defined as
    \[
    \mathcal{M}\left(x,\sum b_y y \right)= \sum b_y \hspace{0.6mm}xy.
    \]
    For each $1\le i \le m$, we define a path $x_{i+1}\phi_i \in P(\B_n(X))$ such that for all $t \in I$,
    \[
    \left(x_{i+1}\phi_i\right)(t) = \mathcal{M}\left(x_{i+1},\phi_i(t)\right).
    \]
    Clearly, $(x_{i+1}\phi_i)(0) = \delta_{x_i}$ and $(x_{i+1}\phi_i)(1) = \delta_{x_{i+1}}$. Finally, we define a mapping $s:X^{m+1} \to P(\B_n(X))$ where, as usual,
    $$
    s(x_1,\ldots,x_{m+1}) = x_2\phi_1 \star x_3\phi_2 \star \cdots \star x_{m+1}\phi_m.
    $$
    Here, we are using the map $\theta_{m}:T_m(\B_n(X)) \to P(\B_n(X))$ from Section~\ref{pred} with $a_{i} = 1/t_{i+1} = m/i$ for all $1 \le i \le m-1$. By definition of $s$, we have that $s(x_1,\ldots,x_{m+1})\in P(\delta_{x_1},\ldots,\delta_{x_{m+1}})$. We still need to check if $s(x_1,\ldots,x_{m+1})$ is resolvable. Let $\mu = \sum \lambda_\alpha \alpha \in \B_n(P_0(X^m))$ be a resolver of $\phi \in RP_0(\B_n(X^m))$ and $\alpha_i=\proj_i\alpha$. Then, $\mu_i = \sum \lambda_\alpha \alpha_i \in \B_n(P_0(X))$ becomes a resolver of $\phi_i$ for each $i$. Let $x_{i+1}\alpha_i \in P(X)$ be given by $(x_{i+1}\alpha_i)(t) = x_{i+1}\alpha_i(t)$ for all $t \in I$. As before, we define a path 
    $$
    \wh{\alpha} = x_2\alpha_1 \star x_3\alpha_2 \star \cdots \star x_{m+1}\alpha_m.
    $$
    Then $\nu = \sum \lambda_\alpha \wh{\alpha} \in \B_n(P(X))$ is a resolver of $s(x_1,\ldots,x_{m+1})$. Hence, $s$ defines an $n$-intertwined $(m+1)$-navigation algorithm. So, $\iTC_{m+1}(X) \le n-1$.
\end{proof}

\begin{corollary}\label{usetopgrp}
    If $X$ is a path-connected topological group, then $\iTC(X) = \icat(X)$.
\end{corollary}
\begin{proof}
    From Proposition~\ref{topgrp}, we get $\iTC(X) \le \icat(X)$ by taking $m = 1$. The reverse inequality follows from Proposition~\ref{ine5}.
\end{proof}

\section{For Higman's group $\H$}\label{higmansecction} For each $m \ge 2$, we are interested in finding spaces $X$ (possibly depending on the choice of $m$) for which $\iTC_m(X) < \dTC_m(X)$. For $m=2$, such an example has been found in~\cite[Proposition 6.4]{Dr}. To obtain examples for each $m \ge 2$, we proceed as follows.

Let $\Gamma$ be a discrete group and let $B\Gamma$ denote the homotopy class of its classifying spaces. Since $\iTC_m$ and $\icat$ are homotopy invariants of spaces by Corollary~\ref{hinv} and Proposition~\ref{simple} (3), respectively, we can define $\iTC_m(\Gamma): = \iTC_m(B \Gamma)$ and $\icat(\Gamma): = \icat(B \Gamma)$. Let $\H$ denote Higman's group \cite{Hi} which is presented as follows.
$$
\H = \langle x,y,z,w \mid xyx^{-1}y^{-2},yzy^{-1}z^{-2}, zwz^{-1}w^{-2},wxw^{-1}x^{-2} \rangle.
$$
It is well-known that $\H$ is a torsion-free discrete group of cohomological dimension $2$ and it is acyclic, i.e., the (co)homology groups of $B\H$ vanish in positive degrees. Furthermore, $B\H$ is a CW complex of finite type. The $m^{th}$ sequential topological complexity of $\H$ was computed in~\cite[Theorem 2.2]{FO} as $2m$ for each $m \ge 2$ (also see~\cite[Theorem 4.1]{GLO2} for the special case $m = 2$).

In~\cite{Dr}, Dranishnikov showed that $\iTC(\H) = 1$. We use his technique to obtain the generalized result that $\iTC_m(\H) = 1$ for all $m \ge 2$. We first prepare as follows.

Let us fix some $m \ge 2$. For each $1 \le j \le m$, consider the polynomial 
$$
z_j(t) = \prod_{i=1,i\ne j}^m (t-t_i) = \prod_{i=1,i\ne j}^m \left(t-\frac{i-1}{m-1}\right)
$$
in the variable $t \in I$. For any fixed $j$, the only roots of $z_j$ are $t_i$ for all $i \ne j$. So, clearly, $z_j(t_j) \ne 0$ and $z_j(t_i) = 0$ for all $i \ne j$. Let 
$$
y_j(t) = \frac{z_j(t)}{z_j(t_j)}.
$$
Then $y_j(t_j) = 1$ and $y_j(t_i) = 0$ for all $i \ne j$. Next, for each $1 \le j \le m$, define 
$$
w_j(t) = \frac{\left(y_j(t)\right)^2}{\sum_{i=1}^m \left(y_i(t)\right)^2}.
$$
By definition, $\sum_{i=1}^m \left(y_j(t)\right)^2 \ne 0$ for all $t$, so $w_j(t)$ is well-defined and continuous. We observe the following. 
 \vspace{-2mm}
\begin{itemize}
        \itemsep-0.25em 
    \item For each $j$, $w_j(t) \in [0,1]$ for all $t \in I$.
    \item For each $j$, $w_j(t_j) = 1$ and $w_j(t_i) = 0$ for all $i \ne j$. 
    \item $\sum_{i=1}^m w_i(t) = 1$ for all $t \in I$.
\end{itemize}
Let $\ast^m X$ denote the iterated join of $X$. Let us denote the weak topology on $\ast^m X$ (in the sense of~\cite{KK}) by $\mathcal{T}_1$ and the strong topology (in the sense of~\cite{Mi}) by $\mathcal{T}_2$. Due to~\cite{Rut} (see also~\cite[Section 1]{FG}), the identity map $\mathscr{I}:(\ast^m X,\mathcal{T}_1)\to(\ast^m X,\mathcal{T}_2)$ is a homotopy equivalence. In fact, it is a homeomorphism when $X$ is compact. We define a map $q_m':X^m\times I \to (\ast^m X,\mathcal{T}_2)$ as
$$
q_m'\left(\left(x_1,x_2,\ldots,x_m\right),t\right) = \sum_{i=1}^{m} w_i(t)\hspace{0.5mm}x_i.
$$
Since each $w_i$ is continuous, the map $q_m'$ is continuous as well, see~\cite[Page 1]{Mi}. Hence, the composition $q_m=\mathscr{I}^{-1} q_m':X^m \times I \to (\ast^m X,\mathcal{T}_1)$ is continuous, where $\mathscr{I}^{-1}$ denotes a homotopy inverse of $\mathscr{I}$. In what follows, we only consider the weak topology $\mathcal{T}_1$ on joins. The map $q_m$ gives a continuous map $Q_m: X^m \to P(\ast^m X)$ such that for all $\ov{x} \in X^m$, we have $Q_m(\ov{x})(t_i)= x_i$ for each $1 \le i \le m$. 

We now extend the proof of the equality $\iTC(\H) = 1$ from~\cite{Dr} as follows.

\begin{proposition}\label{higman}
For Higman's group $\H$, $\iTC_m(\H) = 1$ for all $m \ge 2$.
\end{proposition}

\begin{proof}
    Let us fix some $m$. Since $\H$ is not contractible, we only need to show that $\iTC_m(\H) \le 1$. As noted above, $H_i(B\H) = 0$ for all $i > 0$. Let $\{\text{pt}\}$ denote a one-point space. Since, $H_i(B\H) = 0 = H_i(\{\text{pt}\})$ for each $ i > 0$, Dold's Theorem~\cite{Do} implies that $\wt{H}_i(SP^2(B\H)) = \wt{H}_i(SP^2(\{\text{pt}\})) = 0$ for each $ i > 0$. Hence, $SP^2(B\H)$ is acyclic. Furthermore, $\pi_1(SP^2(B\H)) = \wt{H}_1(B\H) = 0$. Then by Whitehead's theorem and Hurewicz's theorem~\cite{Ha}, $SP^2(B\H)$ is a contractible CW complex and hence an absolute extensor. We note that the disjoint union $\sqcup_{i=1}^m B\H$ is closed in the iterated join $\ast^m B\H$. So, if $\pa:B\H \to SP^2(B\H)$ denotes the diagonal embedding given by $\pa(x) = 2x$, then there exists an extension
\begin{equation}\label{zzzz}
    \begin{tikzcd}
    \bigsqcup_{i=1}^m B\H \arrow{r}{\bigsqcup_{i=1}^m \pa} \arrow[hookrightarrow]{d}
    & 
    SP^2(B\H)
    \\ 
    \ast^m B\H \arrow[ur, dashrightarrow]
    &
\end{tikzcd}
    \end{equation}
    of the map $\sqcup_{i=1}^m \pa$. This produces a map $f:P(\ast^m B\H) \to P(SP^2(B\H))$ of paths. The map $\phi:SP^2(B\H) \to \B_2(B\H)$ defined as
    $$
    \phi(x+y) = \frac{1}{2}x+\frac{1}{2}y
    $$
    is an embedding
    that produces a map $\phi':P(SP^2(B\H)) \to P(\B_2(B\H))$. Finally, consider the composition $\psi = \phi' f Q_m :(B\H)^m \to P(\B_2(B\H))$, where the map $Q_m:(B\H)^m \to P(\ast^m B\H)$ is defined above. By our construction of maps, for any $\ov{x} \in (B\H)^m$, we have that $\psi(\ov{x}) \in P(\delta_{x_1},\ldots, \delta_{x_m})$. We still need to check if $\psi(\ov{x})$ is resolvable. See that each path $g \in P(SP^2(B\H))$ is resolvable in the sense of the equivalent definition mentioned in Lemma~\ref{sames} upon replacing the role of the functor $\B_2$ by the functor $SP^2$ (see also the proof of~\cite[Proposition 6.4]{Dr}). Therefore, its image $\phi'(g) \in P(\B_2(B\H))$ is resolvable in the sense of Definition~\ref{resolve}. Thus, $\psi$ is a $2$-intertwined $m$-navigation algorithm. So, $\iTC_m(B\H) = \iTC_m(\H) = 1$.
\end{proof}

We note that in contrast, the exact values $\dTC_m(\H)$ are not known.

\begin{remark}
    Since $\H$ is torsion-free, it follows from~\cite[Theorem 5.3]{Dr} (see also~\cite[Theorem 7.4]{KW} for a version of the Eilenberg--Ganea theorem~\cite{EG} for the \textit{analog} invariants) that $\dcat(\H) = 2$. Thus, from  Proposition~\ref{higman} and~\cite[Proposition 3.5]{Ja}, we get for each $m \ge 2$ that 
$$
\iTC_m(\H) = 1 < 2(m-1) = 
\dcat(\H^{m-1}) \le \dTC_m(\H).
$$
So, we have shown that for each $m \ge 2$, the first inequality in Proposition~\ref{ine1} can be strict. Hence, in general, the notions of $\iTC_m$ and $\dTC_m$ are different! Furthermore, as noted in~\cite[Corollary 6.5]{Dr}, we get $\icat(\H) = 1$ due to Propositions~\ref{higman} and~\ref{ine5} (1). So, while an analog of the Eilenberg--Ganea theorem holds (only) for torsion-free groups in the distributional setting, such an analog is \textit{not} possible in the intertwining setting, even for torsion-free groups. 
\end{remark}

For a fixed $X$, the non-decreasing sequence $\{\iTC_m(X)\}_{m \ge 2}$ can, in fact, be constant due to Proposition~\ref{higman}. Besides $X = B\H$, another example for which the sequence is again constant at $1$ is $X = \R P^{\infty}$. This follows from Corollary~\ref{vvobv} and~\cite[Section 8.A]{Ja} in view of~\cite[Corollary 7.3]{KW}.

\begin{remark}
In the case of classical $\TC_m$, if $X$ is a non-contractible finite CW complex, then $\TC_m(X) \ge m-1$, see~\cite[Proposition 3.5]{Rud}. However, due to Proposition~\ref{higman}, such a statement does not hold in the case of $\iTC_m$ for any $m \ge 3$.
\end{remark}

The classifying space $B\H$ of Higman's group is an example of a finite-dimensional aspherical CW complex for which $\icat$ and $\iTC_m$ values agree for all $m$, even though it is neither a topological group nor a (product of) co-$H$-space(s). Such an example is not known in the distributional setting or the classical setting.

\section{Some simple characterizations}\label{Characterizations}

Let $\pi_m:RP(\B_n(X)) \to (\B_n(X))^m$ be the evaluation map 
$$
\pi_m(\phi) = \left(\phi(t_1),\phi(t_2),\ldots,\phi(t_m)\right),
$$
where $t_i = (i-1)/(m-1)$ for all $1 \le i \le m$. Recall the inclusion $\omega_n:X \to \B_n(X)$ defined as $\omega_n(x) = \delta_x$. Consider the following pullback diagram, 
\begin{equation}\label{one1}
    \begin{tikzcd}
\U_{n,m} \arrow{r}{\eta_m} \arrow[swap]{d}{q_m} 
& 
RP(\B_n(X)) \arrow{d}{\pi_m}
\\
X^m  \arrow{r}{\omega_n^m}
& 
(\B_n(X))^m
\end{tikzcd}
\end{equation}
where $\omega_n^m(\ov{x}) = (\omega_n(x_1),\omega_n(x_2),\ldots,\omega_n(x_m)) = (\delta_{x_1},\ldots,\delta_{x_m})$.

\begin{proposition}\label{fake1}
    $\iTC_m(X) < n$ if and only if there exists a section to the map $q_m:\U_{n,m} \to X^m$.
\end{proposition}

\begin{proof}
    First, let $\iTC_m(X) < n$. Then there exists an $n$-intertwined $m$-navigation algorithm $s_m: X^m \to RP(\B_n(X))$. Note that $\pi_m \hspace{0.4mm} s_m(\ov{x}) = (\delta_{x_1},\ldots,\delta_{x_m}) = \omega_n^m(\ov{x})$. So, by the universal property of the above pullback, there exists $K:X^m \to \U_{n,m}$ such that $q_m K = \mathbbm{1}_{X^m}$. So, $q_m$ admits a section. Conversely, let $K: X^m \to \U_{n,m}$ be a section of $q_m$, i.e., $q_m K = \mathbbm{1}_{X^m}$. Note that 
$$
\pi_m(\eta_m K) = (\pi_m \eta_m)K = (\omega_n^m q_m)K = \omega_n^m(q_m K) = \omega_n^m.
$$
Hence, $(\eta_m K)(\ov{x}) \in P(\delta_{x_1},\ldots,\delta_{x_m})$. So, $\eta_m K$ defines an $n$-intertwined $m$-navigation algorithm on $X$. Therefore, $\iTC_m(X) \le n-1$.
\end{proof}

Similarly, if we consider the evaluation map $p_0:RP_0(\B_n(X)) \to \B_n(X)$ defined as $p_0(\phi) = \phi(0)$ and let $q:\V_n \to X$ be the pullback of $p_0$ along $\omega_n$ for the pullback space $\V_n$, then the following statement follows immediately.

\begin{proposition}\label{fake2}
    $\icat(X) < n$ if and only if there exists a section to the map $q:\V_n \to X$.
\end{proposition}

\begin{remark}
    We note that the evaluation maps $\wh{\pi_m}: P(\B_n(X)) \to (\B_n(X))^m$ and $\wh{p_0}:P_0(\B_n(X))\to \B_n(X)$ are fibrations. So, due to Corollary~\ref{useonce}, they are the fibrational substitutes of the maps $\pi_m$ and $p_0$, respectively. In the above setting, the maps $\pi_m$ and $p_0$ cannot be replaced by their fibrational substitutes $\wh{\pi_m}$ and $\wh{p_0}$ because then, the reverse implications will not hold in the above propositions.
\end{remark}

Let $RP_1(\B_n(X)) = \{f \in RP(\B_n(X)) \mid f(0) = \delta_{x_0}\}$ for some fixed $x_0 \in X$. We define an evaluation map $\xi_m:RP_1(\B_n(X)) \to (\B_n(X))^m$ as
$$
\xi_m (\phi)= \left(\phi\left(\frac{1}{m}\right), \phi\left(\frac{2}{m}\right), \ldots, \phi\left(\frac{m-1}{m}\right), \phi\left(1\right) \right).
$$
As before, we consider the following pullback diagram.
\begin{equation}\label{three3}
    \begin{tikzcd}
\W_{n,m} \arrow{r}{\rho_m} \arrow[swap]{d}{r_m} 
& 
RP_1(\B_n(X)) \arrow{d}{\xi_m}
\\
X^m  \arrow{r}{\omega_n^m}
& 
(\B_n(X))^m
\end{tikzcd}
\end{equation}

The following statement may be compared with~\cite[Proposition 5.7]{Ja}.

\begin{proposition}
    For any $m \ge 1$, if $\icat(X^m) < n$, then there exists a section to $r_m:\W_{n,m} \to X^m$.
\end{proposition}

\begin{proof}
If $\icat(X^m) < n$, then there exists a continuous map $H:X^m \to RP_0(\B_n(X^m))$ such that for some fixed basepoint $\ov{x_0} = (x_1^0,\ldots,x_m^0) \in X^m$, $H(\ov{x})(0) = \delta_{\ov{x}}$ and $H(\ov{x})(1) = \delta_{\ov{x_0}}$ for all $\ov{x} \in X^m$. Let $H(\ov{x}) = \phi$. For projections $\proj_i:X^m \to X$, let $\phi_i = (\proj_i)_*\ov{\phi} \in P(\B_n(X))$ be paths from $\delta_{x_i^0}$ to $\delta_{x_i}$. For each $i$, let $\gamma_i \in P(X)$ be a path from $x_i^0$ to $x_{i+1}^0$ and $\gamma_i^* = \omega_n\gamma_i \in P(\B_n(X))$. Finally, define a map $K: X^m \to P(\B_n(X))$ such that
$$
K(\ov{x}) = \phi_1 \star \left(\ov{\phi}_1 \cdot \gamma_1^* \cdot \phi_2\right) \star \left(\ov{\phi}_2 \cdot \gamma_2^* \cdot \phi_3 \right) \star \cdots \star \left(\ov{\phi}_{m-1} \cdot \gamma_{m-1}^* \cdot \phi_m\right),
$$
where we use the map $\theta_m:T_m(\B_n(X)) \to P(\B_n(X))$ in Section~\ref{pred} with $a_i = i/m$ for all $1 \le i \le m-1$. The continuity of $K$ follows from Lemma~\ref{iscont}. For all $\ov{x} \in X^m$, we have $K(\ov{x})(0) = \delta_{x_1^0}$ and $K(\ov{x})(i/m) = \delta_{x_i}$ for all $1 \le i \le m$. Now, we check the resolvability of $K(\ov{x})$. Let $\mu = \sum \lambda_\alpha \alpha \in \B_n(P(X))$ be a resolver of $\phi \in RP_0(\B_n(X))$. Letting $\alpha_i = \proj_i \ov{\alpha}$, we can define a path 
$$
\wt{\alpha} = \alpha_1 \star \left(\ov{\alpha}_1 \cdot \gamma_1 \cdot \alpha_2\right) \star \left(\ov{\alpha}_2 \cdot \gamma_2 \cdot \alpha_3 \right) \star \cdots \star \left(\ov{\alpha}_{m-1} \cdot \gamma_{m-1} \cdot \alpha_m\right).
$$
Then the measure $\nu = \sum \lambda_\alpha \wt{\alpha} \in \B_n(P(X))$ is a resolver of $K(\ov{x})$. Hence, the image of $K$ is in $RP_1(\B_n(X))$. Clearly, $\xi_m K= \omega_n^m$. So, by the universal property of the pullback in Diagram~\ref{three3}, we obtain a section $s:X^m \to \W_{n,m}$ to the map $r_m$.
\end{proof}

\section{Cohomological lower bounds}\label{Bounds}
In the case of intertwining motion, there is a lot of liberty with the (pieces of the) system while navigating between given positions (see Section~\ref{intmot}). Indeed, the intertwining of the strings of a resolvable path can happen in various ways (see Section~\ref{Paths2}). Due to this liberty, for most configuration spaces, very few rules may be needed to plan an intertwining motion in the space. For any non-contractible configuration space, we know that at least two rules are needed. So, to enrich the theory of the intertwining invariants, it becomes important to show that there exist spaces for which at least three rules are required to plan such a motion.

The aim of this section is to obtain lower bounds for $\icat(X)$ and $\iTC_m(X)$. As in the case of $\dcat(X)$ and $\dTC_m(X)$, we aim to find these in terms of the cohomology of some functor of the space $X$ and its products $X^m$. As a starting point, we check if the Dirac inclusion $\omega_n:X \hookrightarrow \B_n(X)$ can be of help. For a coefficient ring $R$, this induces a homomorphism $\omega_n^*:H^*(\B_n(X);R) \to H^*(X;R)$. 

\begin{remark}\label{useless1}
    If $\alpha \in H^k(\B_n(X);R)$ for some $k \ge 1$ and ring $R$ such that $\omega_n^*(\alpha) \ne 0$, then $\icat(X) \ge n$. To see this, suppose $\icat(X) < n$. Then there exists a map $H : X \to RP_0(\B_n(X))$ such that $H(x)(0) = \delta_{x}$ and $H(x)(1) = \delta_{x_0}$ for each $x \in X$. The map $K:X \times I \to \B_n(X)$ given by
    $K(x,t) = H(x)(t)$ defines a null-homotopy. In particular, this means $\omega_n^*(\alpha) = 0$. 
    Similarly, we can obtain a formal lower bound to $\iTC_m(X)$ for each $m\ge 2$. Recall Diagram~\ref{one1} and let $\Delta_n':\B_n(X) \to (\B_n(X))^m$ be the diagonal map. It is easy to show that $\iTC_m(X) \ge n$ if $\alpha \in H^k((\B_n(X))^m;R)$ for some $k \ge 1$ and ring $R$ such that $(\Delta_n')^*(\alpha) = 0$ and $(\omega_n^m)^*(\alpha) \ne 0$.
\end{remark}

\begin{lemma}
    For any ring $R$ and $n\ge 2$, the map $\omega_n^*:H^*(\B_n(X);R)\to H^*(X;R)$ is trivial.
\end{lemma}

\begin{proof}
    For a fixed basepoint $x_0\in X$, let $CX$ denote the reduced cone of $X$ given by $CX=
    (X\times[0,1])/(X\times \{0\}\cup\{x_0\}\times[0,1])$. Let us denote the basepoint of $CX$ by $[x_0]$, which is the equivalence class of $(x_0,0)$. The other points are formal linear combinations $xt+(1-t)x_0$ for $x\ne x_0$ and $t>0$. Consider the map $F:CX\to\B_2(X)$ defined as follows: $F(xt+(1-t)x_0)=t \delta_x + (1-t)\delta_{x_0}$ for $x\ne x_0$ and $t>0$, and $F([x_0]) = \delta_{x_0}$. Clearly, $F$ is continuous and thus, we have the map $X\hookrightarrow CX\xrightarrow{F}\B_2(X)\hookrightarrow\B_n(X)$ for any $n \ge 2$. Hence, our map $\omega_n:X \to\B_n(X)$ defined as $\omega_n(x) = \delta_x$ factors through $CX$. But $CX$ is contractible to $[x_0]$. Thus, it follows that $\omega_n^*=0$ with respect to all coefficients.
\end{proof}

So, the lower bounds mentioned in Remark~\ref{useless1} are not useful. In particular, we cannot obtain $2$ as a lower bound to $\icat(X)$ or $\iTC_m(X)$ using Remark~\ref{useless1}.

The next idea is to proceed using the symmetric products as in~\cite{DJ},~\cite{Ja}. But, we can't directly adopt the approach of~\cite[Lemma 4.5]{DJ} because the resolvers of a resolvable path can have wildly different characteristics --- see Section~\ref{Paths2} for various examples. So, in general, it is difficult to characterize resolvable paths.

\subsection{Preparation}\label{prepo}
In this subsection, we prepare to provide various conditions in which $2$ could be the lower bound to the intertwining invariants of spaces.

Let us fix some $\mu = af+bg\in \B_2(P(X))$ where $a,b > 0$ and $f \ne g$ as paths in $P(X)$. Then we can see $\mu$ as an element of $\text{Sym}(*^2 P(X))$ and write it as $[af,bg]$. 

Consider the natural map $\Psi:\text{Sym}(*^2 P(X)) \to P(\text{Sym}(*^2 X))$ defined such that for any $[s_1\phi_1,s_2\phi_2]\in\text{Sym}(\ast^2 P(X))$, we have $\Psi([s_1\phi_1,s_2\phi_2])(t) = [s_1\phi_1(t),s_2\phi_2(t)]$ for all $t\in I$. For $\mu$ as above, let us denote $\Psi(\mu) \in P(\text{Sym}(\ast^2 X))$ by $\wt{\mu}$. Let $Y(\mu) = \wt{\mu}(I)=\{[af(t),bg(t)] \mid t \in I\} \subset \text{Sym}(*^2 X)$. We recall from Section~\ref{delicate} the continuous map $\mathcal{I}:(\B_2(X),\mathscr{T}_1) \to (\B_2(X),\mathscr{T}_2)$. Let $q(\mu):Y(\mu) \to (\B_2(X),\mathscr{T}_1)$ denote the restriction of the quotient map $q:\text{Sym}(*^2 X) \to (\B_2(X),\mathscr{T}_1)$ to $Y(\mu)$.

\begin{lemma}\label{embed}
    The map $\wt{q}(\mu) = \mathcal{I}q(\mu):Y(\mu) \to (\B_2(X),\mathscr{T}_2)$ is an embedding.
\end{lemma}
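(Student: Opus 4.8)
The plan is to show that $\wt{q}(\mu)$ is a continuous injection from a compact space to a Hausdorff space, hence an embedding. Since $Y(\mu) = \wt\mu(I)$ is the continuous image of the compact interval $I$ (the map $\wt\mu$ is continuous by construction of $\Psi$, which is continuous because the join coordinates vary continuously), $Y(\mu)$ is compact. The target $(\B_2(X),\mathscr{T}_2)$ is a metric space, hence Hausdorff. The composite $\wt q(\mu) = \mathcal{I}q(\mu)$ is continuous since $q$ is a quotient map (restriction of a continuous map) and $\mathcal{I}$ is continuous on metric/CW inputs as recalled in Section~\ref{delicate}. So by the standard fact that a continuous bijection from a compact space onto a Hausdorff space is a homeomorphism onto its image, it suffices to prove $\wt q(\mu)$ is \emph{injective}.

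First I would unwind what injectivity means concretely. Write $\mu = [af,bg]$ with $a,b>0$, $a+b=1$, and $f\ne g$ in $P(X)$. A point of $Y(\mu)$ is $\wt\mu(t) = [af(t),bg(t)] \in \text{Sym}(*^2 X)$, and $\wt q(\mu)(\wt\mu(t))$ is the corresponding probability measure in $\B_2(X)$. Two such points $\wt\mu(s)$ and $\wt\mu(t)$ map to the same measure precisely when $af(s)+bg(s)$ and $af(t)+bg(t)$ are equal as formal measures after applying the identification $n_1 x + n_2 x = (n_1+n_2)x$. I would split into cases according to whether $f(t)=g(t)$ or not. If $f(t)\ne g(t)$, the measure $af(t)+bg(t)$ has support exactly $\{f(t),g(t)\}$ with weights $\{a,b\}$; if $f(t)=g(t)$ it is the Dirac measure $\delta_{f(t)}$. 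The key observation needed is that $Y(\mu)$, as a \emph{subspace} cut out by the single path $\wt\mu$, cannot contain two distinct elements of $\text{Sym}(*^2 X)$ that collapse to the same measure — and this is where the hypothesis $f\neq g$ together with the structure of $*^2 X$ does the work, since in $\text{Sym}(*^2 X)$ (before quotienting by $q$) the points $[af(s),bg(s)]$ for varying $s$ with $f(s)\ne g(s)$ are already distinguished by their pair of $X$-coordinates, and the only identifications $q$ makes are along the locus where $f(s)=g(s)$, on which $\wt\mu$ is itself injective-up-to-the-relation in a controlled way. I would therefore argue: if $\wt q(\mu)(\wt\mu(s)) = \wt q(\mu)(\wt\mu(t))$, then either both are Dirac measures $\delta_x$, forcing $f(s)=g(s)=x=f(t)=g(t)$, or both have two-point support, forcing $\{f(s),g(s)\} = \{f(t),g(t)\}$ as sets with matching weights; in the latter case, since the weights $a\ne b$ generically pin down which coordinate is which (and even when $a=b$ the formal-sum description $[af(s),bg(s)] = [af(t),bg(t)]$ holds in $\text{Sym}(*^2 X)$ directly), we conclude $\wt\mu(s) = \wt\mu(t)$ in $Y(\mu)$.

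The main obstacle I anticipate is the degenerate case where the two strings coincide at a time, i.e.\ points $t$ with $f(t)=g(t)$: there $\wt\mu(t)$ is a point of the "diagonal-type" stratum, $q$ identifies $[af(t),bg(t)]$ with $(a+b)f(t) = \delta_{f(t)}$, and one must make sure that such a $\wt\mu(t)$ is not identified by $\wt q(\mu)$ with some $\wt\mu(s)$ where $f(s)\ne g(s)$ — this is immediate since the images have different support sizes — nor with another diagonal point $\wt\mu(s)$ unless $f(s)=f(t)$. Handling the weight-matching bookkeeping cleanly (especially distinguishing the roles of $f$ and $g$ when $a=b$, where I would appeal to the fact that equality already holds in $\text{Sym}(*^2 X)$ before applying $q$, so no information is lost) is the only genuinely fiddly point; once injectivity is established, the compact-to-Hausdorff argument closes the proof immediately, and this embedding is exactly what is needed to transport cohomology of $\text{Sym}(*^2 X)$ (equivalently $(\B_2(X),\mathscr{T}_1)$) to obstructions for resolvable paths in the $n=2$ lower-bound arguments of Section~\ref{prepo}.
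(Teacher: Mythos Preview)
Your proposal is correct and follows essentially the same route as the paper: observe that $Y(\mu)$ is compact and the target is Hausdorff (so a continuous injection is an embedding), then verify injectivity by the same case analysis on $|\supp(\nu)|\in\{1,2\}$, splitting the two-point case into $a\ne b$ (where the weights pin down $f(s)=f(t)$, $g(s)=g(t)$) and $a=b$ (where equality in $\text{Sym}(*^2 X)$ is immediate from the unordered pair). The only cosmetic difference is that the paper first notes the map is closed and then checks injectivity, whereas you invoke the compact-to-Hausdorff bijection principle directly; the content is identical.
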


\begin{proof}
    Note that the $Y(\mu)$ is compact and $(\B_2(X),\mathscr{T}_2)$ is Hausdorff. Hence, $\wt{q}(\mu)$ is a closed map. Now, we show that $q(\mu)$ is injective. Let $s,t \in I$ be such that 
    \[
    q(\mu)\left([af(t),bg(t)]\right) = q(\mu)\left([af(s),bg(s)]\right)=\nu.
    \]
    If $|\supp(\nu)|=1$, then $f(t) = g(t) = f(s) = g(s)$, which obviously implies that $[af(t),bg(t)]=[af(s),bg(s)]$. On the other hand, if $|\supp(\nu)|=2$, then $f(t) \ne g(t)$ and $f(s) \ne g(s)$. If $a > b$, then we must have $f(t) = f(s)$ and $g(t) = g(s)$, and hence, $[af(t),bg(t)]=[af(s),bg(s)]$. If $a=b$, then we get $[f(t),g(t)] = [f(s),g(s)]$ in $SP^2(X)$, which implies $[af(t),bg(t)]=[af(s),bg(s)]$ in $\text{Sym}(\ast^2X)$. This proves that $q(\mu)$ is injective. Since $\mathcal{I}$ is injective, the map $\wt{q}(\mu)$ is also injective. Finally, since $\wt{q}(\mu)$ is closed, it follows that it is an embedding.
\end{proof}

We denote $\wt{q}(\mu)(Y(\mu)) \subset \B_2(X)$ by $Z(\mu)$. Lemma~\ref{embed} gives a continuous map $\wt{q}(\mu)^{-1}:Z(\mu) \to Y(\mu)$ such that $\wt{q}(\mu)^{-1}(af(t)+bg(t))=[af(t),bg(t)]$ for all $t\in I$. Corresponding to $\mu$ fixed above, define $W(\mu) = \{(af(t),bg(t))\mid t\in I\}\subset \ast^2X$.
Let us recall from Section~\ref{higmansecction} the weak topology $\mathcal{T}_1$ and the strong topology $\mathcal{T}_2$ on the iterated join $\ast^2 X$ and the identity map $\mathscr{I}:(\ast^2X,\mathcal{T}_1)\to(\ast^2X,\mathcal{T}_2)$, which is a homotopy equivalence,~\cite{Rut},~\cite{FG}. Let $\mathscr{I}'$ be the restriction of $\mathscr{I}$ to $W(\mu)$. Let us define a map $r(\mu):(W(\mu),\mathcal{T}_2)\to X^2$ by $r(\mu)(af(t),bg(t))=(f(t),g(t))$. By definition, $r(\mu)$ is continuous, see~\cite[Page 1]{Mi}. So, the composition $r(\mu)\mathscr{I}':(W(\mu),\mathcal{T}_1)\to X^2$ is continuous. Since it is also $\Z_2$-equivariant, we obtain a continuous mapping $\wt{r}(\mu):Y(\mu)\to SP^2(X)$ defined as $\wt{r}(\mu)([af(t),bg(t)])=[f(t),g(t)]=f(t)+g(t)$ due to our convention on $SP^2(X)$ from Section~\ref{pre2}. Thus, we have a continuous map $\wt{r}(\mu)\wt{q}(\mu)^{-1}:Z(\mu) \to SP^2(X)$ such that for all $t \in I$,
\[
\wt{r}\left(\mu\right)\hspace{0.3mm}\wt{q}\left(\mu\right)^{-1}\left(af(t) + bg(t)\right)= f(t)+g(t).
\]

\begin{remark}\label{indep}
    Let $\mu = af+bg$ and $\nu = a'f'+b'g'$ be two resolvers of $\phi \in RP(\B_2(X))$ with $a,b,a',b'>0$, $f\ne g$, and $f'\ne g'$. We claim that $\wt{\mu}(t)= \wt{\nu}(t)$ for all $t \in I$. 
    Since $f\ne g$, there exists $s\in(0,1)$ such that $f(s)\ne g(s)$. Since $af(t)+bg(t)=a'f'(t)+b'g'(t)$ for each $t\in I$, 
    $f'(s)\ne g'(s)$. First, let $a=b=\frac{1}{2}$. Then due to the above equality, $a'=b'=\frac{1}{2}$. So, $[f(t),g(t)]=[f'(t),g'(t)]$ holds for all $t\in[0,1]$ and thus, $\wt{\mu}(t)=\wt{\nu}(t)$. Next, we let $a>b$. Then because $af(s)+bg(s)=a'f'(s)+b'g'(s)$ for $s\in (0,1)$ as above, either $a=a'$ and $b=b'$, or $a=b'$ and $b=a'$. In the first case, $af(t)=a'f'(t)$ and $bg(t)=b'g'(t)$ for all $t\in[0,1]$. In the second case, $af(t)=b'g'(t)$ and $bg(t)=a'f'(t)$. So, in any case, $\wt{\mu}(t)= \wt{\nu}(t)$ for all $t \in I$. Hence, $Y(\mu) = Y(\nu)$, and as maps, $\wt{q}(\mu) = \wt{q}(\nu)$ and $\wt{r}(\mu)\wt{q}(\mu)^{-1} = \wt{r}(\nu)\wt{q}(\nu)^{-1}$.
\end{remark}

 As the following two examples show, when $n \ge 3$, these nice properties are not satisfied by the measures in $\B_n(P(X))$ that have supports of maximum size.

\begin{example}\label{notgeneral}
    Let $\mu = \frac{1}{2}\alpha+\frac{1}{4}(\gamma,\beta) + \frac{1}{4}\gamma \in \B_3(P(X))$, in the notations of Section~\ref{Paths2} where $\beta(t_1)=\gamma(t_1)$ for some $t_1\in(0,1)$. Clearly, $\wt{q}(\mu)(t) = \frac{1}{2}\alpha(t) + \frac{1}{2}\gamma(t)$ for all $t \in (0,t_1)$. Let $t,s \in (0,t_1)$ such that $\alpha(t) = \gamma(s) \ne \gamma(t)=\alpha(s)$. Let us define the map $\wt{q}(\mu):Y(\mu)\to(\B_3(X),\mathscr{T}_2)$ as in Lemma~\ref{embed}. Then 
    \[
    \wt{q}(\mu)\left[\frac{1}{2}\alpha(t),\frac{1}{4}\gamma(t),\frac{1}{4}\gamma(t)\right] = \frac{1}{2}\alpha(t) + \frac{1}{2}\gamma(t) =  \wt{q}(\mu)\left[\frac{1}{2}\alpha(s),\frac{1}{4}\gamma(s),\frac{1}{4}\gamma(s)\right] 
    \]
    Hence, $\wt{q}(\mu)$ is not injective and thus, it is not an embedding.
    \end{example}

    \begin{example}
        Consider a resolvable path $\phi:I \to \B_3(X)$ described as follows. At $t=0$, the path breaks into two strings of weights $\frac{1}{2}$ each. At $t=t_1$, the strings intertwine and break into three strings of weights $\frac{1}{2},\frac{1}{4}$, and $\frac{1}{4}$. Then they recombine at $t=1$. While the measure $\mu \in \B_3(P(X))$ defined in Example~\ref{notgeneral} is one resolver of $\phi$, another is $\nu = \frac{1}{2}(\gamma,\alpha)+\frac{1}{4}(\alpha,\beta) + \frac{1}{4}(\alpha,\gamma) \in \B_3(P(X))$. By the description of $\phi$, the paths $\alpha$ and $\gamma$ are not identical on $(0,t_1)$. Thus, there exists some $t \in (0,t_1)$ such that $\alpha(t) \ne \gamma(t)$. Then for this $t$, in the above notations, we have
        \[
        \wt{\mu}(t) = \left[\frac{1}{2}\alpha(t),\frac{1}{4}\gamma(t),\frac{1}{4}\gamma(t)\right] \ne  \left[\frac{1}{2}\gamma(t),\frac{1}{4}\alpha(t),\frac{1}{4}\alpha(t)\right] = \wt{\nu}(t).
        \]
        Therefore, the outputs are not \textit{resolver invariant} anymore. 
    \end{example}

\subsection{For $\icat$}
Recall the diagonal map $\pa_2:X \to SP^2(X)$ defined in Section~\ref{pre2} as $\pa_2(x)= [x,x]=2x$, which we called \textit{the inclusion} of $X$ into $SP^2(X)$.

\begin{lemma}\label{imp=2}
If $\icat(X) < 2$, then there exist sets $A_1$ and $A_2$ that cover $X$ such that the inclusion $A_i \to SP^2(X)$ is null-homotopic for $i = 1,2$. 
\end{lemma}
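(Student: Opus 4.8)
The plan is to unwind the hypothesis, cut $X$ into two pieces according to the qualitative behaviour of the witnessing homotopy, and contract each piece in $SP^2(X)$ using the lifting machinery of Section~\ref{prepo}. Since $i\cat(X)<2$, there is a continuous map $H:X\to\P_0(\B_2(X))$ with $H(x)(0)=\delta_x$ and $H(x)(1)=\delta_{x_0}$, and every $H(x)$ is resolvable, with resolvers in $\B_2(P(x,x_0))$. Set $A_2=\{x\in X\mid H(x)(t)\in\omega_2(X)\text{ for all }t\}$ and $A_1=X\setminus A_2$; by Remark~\ref{basic} applied at each $t$, a point lies in $A_1$ precisely when $H(x)$ admits a resolver $a\alpha+b\beta$ with $\alpha\ne\beta$ and $a,b>0$, and in $A_2$ precisely when the only resolvers of $H(x)$ are single strings. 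On $A_2$ the continuous inverse of the embedding $\omega_2$ on its image yields a continuous map $x\mapsto\alpha_x\in P(x,x_0)$ with $H(x)=\omega_2\alpha_x$, and $(x,t)\mapsto\pa_2(\alpha_x(t))=[\alpha_x(t),\alpha_x(t)]$ is a homotopy in $SP^2(X)$ from $\pa_2|_{A_2}$ to the constant map at $2x_0$; hence the inclusion $A_2\to SP^2(X)$ is null-homotopic.

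For $A_1$, I would fix for each $x$ a two-string resolver $\mu_x=a_x\alpha_x+b_x\beta_x$ of $H(x)$ and form $\widehat\Xi(x,t)=\big(\wt r\,\wt q(\mu_x)^{-1}\circ H(x)\big)(t)\in SP^2(X)$. By Lemma~\ref{embed} and the resolver-invariance recorded just after it, $\widehat\Xi(x,\cdot)$ depends only on $H(x)$ and equals $t\mapsto[\alpha_x(t),\beta_x(t)]$ --- i.e. the point of $SP^2(X)$ obtained from $H(x)(t)$ by discarding the weights of its (at most two) atoms, doubling the atom when $H(x)(t)$ is a Dirac measure. Thus $\widehat\Xi(x,0)=[x,x]=\pa_2(x)$ and $\widehat\Xi(x,1)=[x_0,x_0]=2x_0$, so once $\widehat\Xi$ is continuous on $A_1\times I$ it is a null-homotopy of the inclusion $A_1\to SP^2(X)$, and with $A_1\cup A_2=X$ this completes the proof.

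The main obstacle is exactly this continuity. The naive ``forget the weights'' assignment sending a measure to the unordered pair of its atoms is genuinely discontinuous on $\B_2(X)$ --- $\tfrac1k\delta_z+(1-\tfrac1k)\delta_y\to\delta_y$ for the Lévy--Prokhorov metric regardless of where $z$ sits, while $[y,z]\not\to[y,y]$ --- which is precisely why no single null-homotopy of $\pa_2$ exists in general and a genuine two-set cover is needed. The point I would exploit is that on $A_1$ the split weight $\{a_x,b_x\}$ is \emph{locally bounded away from $0$}: since $\alpha_x\ne\beta_x$ there is a time $s_0$ at which $H(x)(s_0)$ has two distinct atoms of masses $a_x,b_x$, and near such a ``generic'' measure the number, locations, and masses of the atoms vary continuously for the Lévy--Prokhorov metric (here $X$ is metrizable, being an ANR); since a resolver has weights constant in $t$, this forces, for $x'$ near $x$ and all $t$, the two atoms of $H(x')(t)$ (when there are two) to carry masses bounded below by a fixed $\delta>0$. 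A short Lévy--Prokhorov estimate then shows that if $H(x_k)(t_k)\to\delta_y$ along a sequence in $A_1\times I$, both atoms must converge to $y$, so $\widehat\Xi(x_k,t_k)\to[y,y]=\widehat\Xi(x,t)$; the only other case, convergence to a measure with two distinct atoms, is the stable one. As Remark~\ref{notgeneral} indicates, this step --- and hence the whole argument --- is special to $n=2$, where the lift $\wt q(\mu)^{-1}$ is available.
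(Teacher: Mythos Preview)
Your proof is essentially the paper's: the same partition of $X$ (with the labels $A_1$ and $A_2$ swapped) into the single-string and two-string pieces, and the same null-homotopies in $SP^2(X)$ built from Lemma~\ref{embed} and the resolver-independence recorded after it. Where the paper asserts continuity of the two-string homotopy simply as a ``composition of continuous maps,'' you correctly flag that the global weight-forgetting map $\B_2(X)\to SP^2(X)$ is discontinuous and supply the missing argument --- the resolver weights $\{a_x,b_x\}$, being constant in $t$ and detectable at any two-atom time, are locally bounded away from $0$ on the two-string set, which is exactly what the L\'evy--Prokhorov estimate needs --- a genuine clarification of a point the paper leaves implicit.
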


\begin{proof}
    If $\icat(X)<2$, then for a basepoint $x_0 \in X$, there exists a continuous map $H:X \to RP_0(\B_2(X))$ such that $H(x)(0) = \delta_x$ and $H(x)(1) = \delta_{x_0}$ for all $x \in X$. Define
    \[
    A_1 = \left\{ x \in X \hspace{1mm} \middle| \hspace{1mm} H(x) \text{ has a resolver of support } 1 \right\}.
    \]
Clearly, if $x \in A_1$, then any resolver of $H(x)$ is in $P(X)$ and thus, $H(x)(t) \in X$ for all $t \in I$. So, we define $K_1:A_1 \times I \to SP^2(X)$ as the composition
    \[
    K_1:(x,t) \mapsto H(x)(t) \xmapsto{\pa_2} 2H(x)(t).
    \]
    Let $A_2 = X \setminus A_1$. For each $x \in A_2$, we choose and fix a resolver of $H(x)$, say $\mu_x =a^x\phi_1^x+b^x\phi_2^x \in \B_2(P(X))$. Then $a^x,b^x>0$ and $\phi_1^x\ne\phi_2^x$, and in the notations of Section~\ref{prepo}, we have $H(x)(t) \in Z(\mu_x)\subset \B_2(X)$ for all $t \in I$. Let us define $f_x:I\to SP^2(X)$ as the composition
    \[
    f_x:t\mapsto H(x)(t)=a^x\phi_1^x(t) + b^x\phi_2^x(t) \xmapsto{\wt{r}(\mu_x)\wt{q}(\mu_x)^{-1}} \phi_1^x(t)+\phi_2^x(t).
    \]
    In view of Remark~\ref{indep}, the definition of $f_x$ is independent of the choice of $\mu_x$. Hence, $f_x$ is well-defined, and being a composition of continuous maps, it is continuous. Finally, we define a homotopy $K_2:A_2\times I \to SP^2(X)$ by the formula
        \[
    K_2(x,t) = f_x(t).
    \]
For each $i$ and all $x \in A_i$, $K_i(x,0) = 2x = \pa_2(x)$ and $K_i(x,1) = 2x_0$ by Remark~\ref{basic}. Hence, we conclude that the maps $K_i$ are the required null-homotopies. 
\end{proof}

We now proceed as in~\cite[Section 4.2]{DJ}. In Alexander--Spanier cohomology, we get the following statement.



\begin{theorem}\label{catbound}
 Suppose that $\alpha_{i}^{*} \in H^{k_{i}}\left(SP^{2}(X);R\right)$ for some ring $R$ and $k_{i} \geq 1$ for $i = 1,2$. Let  $\alpha_{i} \in H^{k_{i}}(X;R)$ be the image of $\alpha_{i}^{*}$ under the induced map $\pa_2^*$ such that  $\alpha_{1} \smile \alpha_{2} \neq 0$. Then $\icat(X) \geq 2.$
 \end{theorem}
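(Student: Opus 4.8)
The plan is to argue by contradiction, using the covering of $X$ furnished by Lemma~\ref{imp=2} together with the classical Lusternik--Schnirelmann cup-length estimate, carried out in Alexander--Spanier cohomology so that no openness of the covering sets is needed, exactly as in~\cite[Section 4.2]{DJ}.

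First I would suppose $i\cat(X) < 2$ and invoke Lemma~\ref{imp=2} to obtain subsets $A_1, A_2$ with $A_1 \cup A_2 = X$ such that the restriction $\pa_2|_{A_i} : A_i \to SP^2(X)$ of the inclusion $\pa_2 : X \to SP^2(X)$ is null-homotopic for $i = 1,2$. Writing $\iota_i : A_i \hookrightarrow X$ for the inclusion, so that $\pa_2\iota_i = \pa_2|_{A_i}$, and using that a null-homotopic map induces the zero homomorphism on $H^{k_i}$ (recall $k_i \ge 1$), I get
\[
\iota_i^*(\alpha_i) = \iota_i^*\pa_2^*(\alpha_i^*) = (\pa_2|_{A_i})^*(\alpha_i^*) = 0 \qquad (i = 1,2).
\]

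Next I would pass to relative classes: by exactness of the Alexander--Spanier cohomology sequence of the pair $(X, A_i)$, the relation $\iota_i^*(\alpha_i) = 0$ allows me to choose $\bar\alpha_i \in H^{k_i}(X, A_i; R)$ restricting to $\alpha_i \in H^{k_i}(X; R)$. The relative cup product then produces
\[
\bar\alpha_1 \smile \bar\alpha_2 \in H^{k_1+k_2}(X, A_1 \cup A_2; R) = H^{k_1+k_2}(X, X; R) = 0,
\]
since $A_1 \cup A_2 = X$. By naturality of cup products along $(X,\emptyset) \to (X, A_i)$ and $(X,\emptyset) \to (X, A_1\cup A_2)$, the image of $\bar\alpha_1 \smile \bar\alpha_2$ in $H^{k_1+k_2}(X; R)$ equals $\alpha_1 \smile \alpha_2$; hence $\alpha_1 \smile \alpha_2 = 0$, contradicting the hypothesis. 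This forces $i\cat(X) \ge 2$.

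The only point that needs care --- and the reason for working in Alexander--Spanier rather than singular cohomology --- is that the covering sets $A_1, A_2$ produced by Lemma~\ref{imp=2} are in general neither open nor closed, so I must rely on the fact that in Alexander--Spanier cohomology both the long exact sequence of an arbitrary pair and the relative cup product $H^p(X,A) \otimes H^q(X,B) \to H^{p+q}(X, A\cup B)$ are available without hypotheses on $A$ and $B$. Granting this, the remaining steps are a routine diagram chase identical to the one for $d\cat$.
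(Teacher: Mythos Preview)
Your proof is correct and follows essentially the same approach as the paper, which defers to \cite[Theorem~4.7]{DJ} for the case $n=2$: assume $i\cat(X)<2$, apply Lemma~\ref{imp=2} to obtain the two-set cover, pull the classes $\alpha_i^*$ back to $X$ where they die on $A_i$, lift to relative Alexander--Spanier classes, and derive the contradiction from the relative cup product landing in $H^*(X,X;R)=0$. Your final paragraph correctly isolates the one subtle point---the need for Alexander--Spanier cohomology because the $A_i$ are arbitrary subsets---which is exactly the content of the remark the paper places after the theorem.
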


The proof uses Lemma~\ref{imp=2} and is the same as that of~\cite[Theorem 4.7]{DJ} for the particular case $n=2$ (see also~\cite[Proposition 1.5]{CLOT}).

\begin{remark}
 We note that the reason for using Alexander--Spanier cohomology is that in the proof, we need to use the long exact sequence of the pairs $(SP^2(X), A_i)$. But here, the sets $A_i$ need not be open or closed in $SP^2(X)$. So, $(SP^2(X), A_i)$ may not be a good pair in the sense of~\cite{Ha} to use singular cohomology.
\end{remark}

\begin{corollary}\label{rationalcl}
    If $\max\{c\ell_{\R}(X),c\ell_{\Q}(X)\} \ge 2$ for a finite CW complex $X$, then $\icat(X) \ge 2$.
\end{corollary}
\begin{proof}
    Since the Alexander--Spanier cohomology groups are the same as the singular cohomology groups for locally finite CW complexes~\cite{Sp}, this statement follows directly from Theorem~\ref{catbound} and Proposition~\ref{useful}. We note that the finiteness hypothesis on the CW complex is needed to use Proposition~\ref{useful}.
\end{proof}

\subsection{For $\iTC$} We recall that a deformation of a subset $A\subset X$ to another subset $D\subset X$ is a continuous map $H:A\times I\to X$ such that $H|_{A\times\{0\}}:A\to X$ is the inclusion and $H(A\times\{1\})\subset D$.  

\begin{lemma}
If $\iTC(X) < 2$, then there exist sets $B_1$ and $B_2$ that cover $X \times X$ such that the set $B_i$ deforms to the diagonal $\Delta X$ inside $SP^2(X \times X)$ for $i = 1,2$. 
\end{lemma}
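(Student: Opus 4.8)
The plan is to mimic the proof of Lemma~\ref{imp=2}, with $X$ replaced by $X\times X$ and the basepoint $x_0$ replaced by the whole diagonal $\Delta X$. Since $i\TC(X)<2$ and $m=2$ (so $t_1=0$, $t_2=1$), there is a $2$-intertwined $2$-navigation algorithm $s:X\times X\to\P(\B_2(X))$ with $s(x,y)(0)=\delta_x$ and $s(x,y)(1)=\delta_y$ for all $(x,y)$. I would set
\[
B_1=\{(x,y)\in X\times X \mid s(x,y)\text{ has a resolver of support }1\},\qquad B_2=(X\times X)\setminus B_1,
\]
which trivially cover $X\times X$. For $(x,y)\in B_2$ every resolver of $s(x,y)$ has support exactly $2$; I would fix one, $\mu_{x,y}=a_{x,y}\phi_1^{x,y}+b_{x,y}\phi_2^{x,y}\in\B_2(P(X))$ with $a_{x,y},b_{x,y}>0$ and $\phi_1^{x,y}\ne\phi_2^{x,y}$, and then by Remark~\ref{basic} both $\phi_1^{x,y}$ and $\phi_2^{x,y}$ run from $x$ to $y$.

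On $B_1$ this is the classical motion-planning argument: for $(x,y)\in B_1$ the equality $s(x,y)(t)=\delta_{\gamma(t)}$ determines a path $\gamma=\gamma_{x,y}\in P(X)$ from $x$ to $y$ varying continuously with $(x,y)$ (as $\omega_2$ is an embedding), and
\[
K_1\bigl((x,y),t\bigr)=\pa_2\bigl(x,\gamma(1-t)\bigr)=\bigl[(x,\gamma(1-t)),(x,\gamma(1-t))\bigr]
\]
deforms $B_1$ to $\Delta X$ inside $SP^2(X\times X)$: it equals $\pa_2(x,y)$ at $t=0$ and lies in $\pa_2(\Delta X)$ at $t=1$. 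Equivalently, $B_1$ deforms to $\Delta X$ inside $X\times X$ in the usual way, and one post-composes with $\pa_2:X\times X\hookrightarrow SP^2(X\times X)$.

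On $B_2$ I would use the weight-dropping machinery of Section~\ref{prepo}. Passing to $S_2$-orbits from the equivariant map $(x,(z_1,z_2))\mapsto((x,z_1),(x,z_2))$ yields a continuous map $\beta:X\times SP^2(X)\to SP^2(X\times X)$ with $\beta(x,[z_1,z_2])=[(x,z_1),(x,z_2)]$, and I would define
\[
K_2\bigl((x,y),t\bigr)=\beta\!\left(x,\;\wt{r}\,\wt{q}(\mu_{x,y})^{-1}\bigl(s(x,y)(1-t)\bigr)\right)=\bigl[(x,\phi_1^{x,y}(1-t)),(x,\phi_2^{x,y}(1-t))\bigr].
\]
This makes sense since $s(x,y)(I)=Z(\mu_{x,y})$; it is independent of the chosen resolver because, as recorded after Lemma~\ref{embed}, $Y(\mu)$ and the maps $\wt{q}(\mu)$, $\wt{r}\,\wt{q}(\mu)^{-1}$ depend only on $\Phi_2(\mu)=s(x,y)$; it equals $\pa_2(x,y)$ at $t=0$; and it lies in $\pa_2(\Delta X)$ at $t=1$ (using Remark~\ref{basic}). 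Thus $K_1$ and $K_2$ would be the required deformations of $B_1$ and $B_2$.

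The only genuinely delicate point — and the main obstacle — is the joint continuity of $K_2$ on $B_2\times I$, because $(x,y)\mapsto\mu_{x,y}$ cannot in general be chosen continuously. I would settle it exactly as the continuity of $K_2$ is settled in the proof of Lemma~\ref{imp=2}: $K_2$ is a composite of the continuous adjoint $((x,y),t)\mapsto s(x,y)(1-t)$, the continuous (and, by resolver-invariance, well-defined) weight-dropping map on images of resolvable paths in $\B_2(X)$, and $\beta$. As stressed in Section~\ref{prepo}, this argument really relies on being in the case $n=2$ and does not carry over to $n\ge 3$, but it requires no idea beyond what is already in hand.
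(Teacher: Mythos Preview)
Your proof is correct and follows essentially the same approach as the paper's. The only cosmetic difference is that the paper fixes the \emph{second} coordinate at $y$ and runs the path forward in $t$ (so $K_i(x,y,t)\to 2(y,y)$), whereas you fix the \emph{first} coordinate at $x$ and run the path backward via $1-t$ (so $K_i\to 2(x,x)$); both land on $\Delta X$ and both defer the continuity of $K_2$ to the resolver-invariant weight-dropping argument of Lemma~\ref{imp=2}.
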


\begin{proof}
    If $\iTC(X)<2$, then there exists a continuous map $s:X \times X \to RP(\B_2(X))$ such that $s(x,y)(0) = \delta_x$ and $s(x,y)(1) = \delta_{y}$ for all $(x,y) \in X \times X$. Define
    \[
    B_1 = \left\{ (x,y) \in X \times X \hspace{1mm} \middle| \hspace{1mm} s(x,y) \text{ has a resolver of support } 1 \right\}.
    \]
   We note that if $(x,y)\in B_1$, then $s(x,y)(t)\in X$ for all $t$. Thus, let us define a map $K_1:B_1\times I\to SP^2(X\times X)$ as the composition
   \[
    K_1:(x,y,t) \mapsto (x,y,t,y) \mapsto (s(x,y)(t),y) \mapsto 2(s(x,y)(t),y).
    \]
    Let $B_2 = (X \times X) \setminus B_1$. For each $(x,y)\in B_2$, let $\mu_{(x,y)}=a^{(x,y)}\phi_1^{(x,y)}+b^{(x,y)}\phi_2^{(x,y)}$ be a fixed resolver of $s(x,y)$. Now, just like in the proof of Lemma~\ref{imp=2}, we define $f_{(x,y)}:I \to SP^2(X)$ as the composition
\[
f_{(x,y)}:t\mapsto s(x,y)(t) = a^{(x,y)}\phi_1^{(x,y)}(t) + b^{(x,y)}\phi_2^{(x,y)}(t)\mapsto \phi_1^{(x,y)}(t) +\phi_2^{(x,y)}(t)
\]
using the map $\wt{r}(\mu_{(x,y)})\wt{q}(\mu_{(x,y)})^{-1}$. Due to Remark~\ref{indep}, $f_{(x,y)}$ is independent of the choice of $\mu_{(x,y)}$. Thus, being a composition of continuous maps, it is continuous. Finally, we define a continuous map $K_2:B_2\times I\to SP^2(X\times X)$ as the composition
    \[
    K_2:(x,y,t)\mapsto (x,y,t,y)\mapsto (f_{(x,y)}(t),y)\mapsto \left(\phi_1^{(x,y)}(t),y\right)+\left(\phi_2^{(x,y)}(t),y\right).
    \]
    The second map used in the above composition is continuous due to the functoriality of $SP^2$. Note that $K_i(x,y,0) = 2(x,y)$ and $K_i(x,y,1) = 2(y,y)$ for $i=1,2$ in view of Remark~\ref{basic}. Thus, each $K_i$ defines the required deformation.
\end{proof}




The proof of the following theorem in Alexander--Spanier cohomology is the same as that of~\cite[Theorem 4.12]{DJ} for the case $n=2$ (see also~\cite[Theorem 7]{Far1}).

\begin{theorem}\label{boundtc}
Suppose that $\beta_{i}^{*} \in H^{k_i}\left(SP^{2}(X \times X);R\right)$ for some ring $R$ and $k_{i} \ge 1$ for $i = 1,2$. Let $\beta_i \in H^{k_i}(X \times X;R)$ be the image of $\beta_i^*$ under the homomorphism induced by the inclusion. If $\beta_i$ are zero-divisors such that
 $\beta_{1} \smile \beta_{2} \neq 0$, then $\iTC(X) \geq 2$.
\end{theorem}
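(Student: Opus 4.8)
The plan is to run the classical zero-divisor cup-length argument for $\TC$ (equivalently, the proof of~\cite[Theorem 4.12]{DJ} specialized to $n=2$), using the preceding lemma in place of an open covering of $X\times X$ by sets that deform to the diagonal; the difference is that the deformations here take place inside $SP^2(X\times X)$ rather than inside $X\times X$, which is why the relevant cohomology classes live on $SP^2(X\times X)$. So I would argue by contradiction. Assume $i\TC(X) < 2$. By the lemma just proved there are sets $B_1, B_2$ with $B_1 \cup B_2 = X\times X$ and deformations $K_i \colon B_i \times I \to SP^2(X\times X)$ such that $K_i(\cdot,0)$ is the restriction to $B_i$ of the inclusion $\pa_2 \colon X\times X \hookrightarrow SP^2(X\times X)$ and $K_i(B_i\times\{1\})$ lies in the diagonal copy $\pa_2(\Delta X)$ of $X$ inside $SP^2(X\times X)$. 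Write $j_i \colon B_i \hookrightarrow X\times X$ for the inclusion, so that $\beta_i = \pa_2^*(\beta_i^*)$ by hypothesis.

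The first step is to show that $\beta_i^*$ pulls back to zero on $B_i$, i.e. $(\pa_2 j_i)^*(\beta_i^*) = 0$ in $H^{k_i}(B_i;R)$. Since $\pa_2 j_i = K_i(\cdot,0)$ is homotopic, through $K_i$, to a map $g_i \colon B_i \to SP^2(X\times X)$ with image in $\pa_2(\Delta X)$, it is enough to observe that $\beta_i^*$ restricts to zero on $\pa_2(\Delta X)$. But $\pa_2(\Delta X)$ is the homeomorphic image of $X$ under $\pa_2 \circ \Delta$ (with $\Delta \colon X \to X\times X$ the diagonal map), and $(\pa_2 \Delta)^*(\beta_i^*) = \Delta^*(\pa_2^*(\beta_i^*)) = \Delta^*(\beta_i) = 0$ precisely because $\beta_i$ is a zero-divisor. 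This is the only place where the hypotheses of the theorem are used, and the step I expect to need the most care is this factorization of $g_i$ through the diagonal; everything after it is formal.

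The remaining argument is bookkeeping with the long exact sequences of pairs, carried out in Alexander--Spanier cohomology (this is essential, as the $B_i$ need be neither open nor closed, so the pairs below are not ``good'' pairs). Identifying $B_i$ with its homeomorphic image $\pa_2(B_i)\subset SP^2(X\times X)$, the first step says $\beta_i^*$ restricts to zero on $\pa_2(B_i)$, so exactness of the sequence of $(SP^2(X\times X), \pa_2(B_i))$ produces a relative class $\wt{\beta_i^*} \in H^{k_i}(SP^2(X\times X), \pa_2(B_i);R)$ mapping to $\beta_i^*$. The relative cup product $\wt{\beta_1^*}\smile\wt{\beta_2^*}$ then lies in $H^{k_1+k_2}(SP^2(X\times X), \pa_2(B_1)\cup\pa_2(B_2);R)$, which equals $H^{k_1+k_2}(SP^2(X\times X), \pa_2(X\times X);R)$ because $\pa_2$ is injective and $B_1\cup B_2 = X\times X$; under the map to absolute cohomology it maps to $\beta_1^*\smile\beta_2^*$. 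Restricting once more from $H^*(SP^2(X\times X))$ to $H^*(\pa_2(X\times X))$ sends $\beta_1^*\smile\beta_2^*$ to $0$ by exactness of the same pair; but that restriction, composed with the identification $\pa_2(X\times X)\cong X\times X$, is exactly $\pa_2^*$. Hence $\beta_1 \smile \beta_2 = \pa_2^*(\beta_1^*\smile\beta_2^*) = 0$ in $H^{k_1+k_2}(X\times X;R)$, contradicting $\beta_1\smile\beta_2\ne 0$, and therefore $i\TC(X)\ge 2$.
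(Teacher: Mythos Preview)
Your proposal is correct and follows essentially the same approach as the paper: the paper states that the proof is the same as that of \cite[Theorem 4.12]{DJ} for the particular case $n=2$, which is precisely the deformation-to-the-diagonal plus relative-cup-product argument you have outlined, carried out in Alexander--Spanier cohomology using the sets $B_1,B_2$ provided by the preceding lemma. Your organization, working with the pairs $(SP^2(X\times X),\pa_2(B_i))$ and invoking exactness to lift $\beta_i^*$ to relative classes whose cup product then lands in cohomology relative to $\pa_2(X\times X)$, matches the argument pattern displayed explicitly in the paper's proof of Theorem~\ref{boundtcm}.
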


\begin{corollary}\label{rationalzcl}
    If $\max\{zc\ell_{\R}(X),zc\ell_{\Q}(X)\} \ge 2$ for a finite CW complex $X$, then $\iTC(X) \ge 2$.
\end{corollary}

\begin{proof}
    Since the Alexander--Spanier cohomology groups are the same as the singular cohomology groups for locally finite CW complexes~\cite{Sp}, this statement follows directly from Theorem~\ref{boundtc} and Proposition~\ref{useful}.
\end{proof}

\subsection{For $\iTC_m$} To obtain $2$ as a lower bound to $\iTC_m(X)$ for all spaces $X$ and $m \ge 2$, we proceed in a way similar to~\cite[Section 4.B]{Ja}. However, we will have to make some major modifications.  

For a fixed $m \ge 2$, let $\zeta:P(SP^{2}(X)) \to (SP^2(X))^m$ be the fibration
$$
\zeta(\phi)= \left(\phi(t_1),\phi(t_2),\ldots,\phi(t_m)\right)
$$
for $t_i = (i-1)/(m-1)$. 
The diagonal map $\pa_2:X \to SP^{2}(X)$ gives the product $\pa_2^m:X^m \to (SP^{2}(X))^m$ defined as $\pa_2^m \left(x_1,\ldots,x_m\right)= (2 x_1, \ldots, 2 x_m)$. Let us consider the following pullback diagram.
\begin{equation}\label{four4}
\begin{tikzcd}[contains/.style = {draw=none,"\in" description,sloped}]
\mathcal{D}_{m} \arrow{r}{a} \arrow[swap]{d}{\sigma} 
& 
P(SP^{2}(X)) \arrow[swap]{d}{\zeta}
\\
X^m  \arrow{r}{\pa_2^m}
& 
(SP^{2}(X))^m
\end{tikzcd}
\end{equation}
We note that in this setting, one can replace the set $X^m$ on the left with any of its subset $C_i$ and obtain a pullback diagram as above with the corresponding pullback space $\mathcal{D}_{m,i}$ and the canonical projection fibration $\sigma_i:\mathcal{D}_{m,i} \to C_i$.

\begin{lemma}\label{second}
If $\iTC_m(X) < 2$, then there exist sets $C_1$ and $C_2$ that cover $X^m$ and over each of which the fibration $\sigma_i:\mathcal{D}_{m,i} \to C_i$ has a section.
\end{lemma}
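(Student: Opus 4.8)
The plan is to reuse the partition-and-lift strategy of \lemref{imp=2} and of the $i\TC$ lemma preceding \theoref{boundtc}, now for the sequential pullback of Diagram~\ref{four4}. Assume $i\TC_m(X) < 2$ and fix a $2$-intertwined $m$-navigation algorithm $s_m : X^m \to \P(\B_2(X))$; thus $s_m(\ov{x})(t_i) = \delta_{x_i}$ for all $i$, and each $s_m(\ov{x})$ is resolved by a measure in $\B_2(P(\ov{x}))$. Set
\[
C_1 = \left\{ \ov{x} \in X^m \hspace{1mm} \middle| \hspace{1mm} s_m(\ov{x}) \text{ has a resolver of support } 1 \right\}, \qquad C_2 = X^m \setminus C_1 .
\]
Since every resolver in $\B_2(P(X))$ has support at most $2$, for each $\ov{x} \in C_2$ every resolver of $s_m(\ov{x})$ has support exactly $2$, whereas for each $\ov{x} \in C_1$ the path $s_m(\ov{x})$ takes all its values in the image of the embedding $\omega_2 : X \hookrightarrow \B_2(X)$.

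Over $C_1$ I would argue directly. For $\ov{x} \in C_1$ the map $\omega_2^{-1} \circ s_m(\ov{x}) : I \to X$ is a continuous path, and post-composing it with the inclusion $\pa_2 : X \to SP^2(X)$ gives a continuous path in $SP^2(X)$; since $\omega_2^{-1}$ is continuous on $\omega_2(X)$ and post-composition with a continuous map is continuous on path spaces, the assignment $a_1 : C_1 \to P(SP^2(X))$, $a_1(\ov{x}) = \pa_2 \circ \omega_2^{-1} \circ s_m(\ov{x})$, is continuous. Evaluating at the timestamps gives $a_1(\ov{x})(t_i) = \pa_2\bigl(\omega_2^{-1}(\delta_{x_i})\bigr) = 2x_i$, so $\zeta \circ a_1 = \pa_2^m|_{C_1}$. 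Applying the universal property of the pullback $\mathcal{D}_{m,1}$ (the restriction of Diagram~\ref{four4} to $C_1$) to the pair $(\mathbbm{1}_{C_1}, a_1)$ then produces the desired section of $\sigma_1$.

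Over $C_2$ I would use the machinery of \secref{prepo}. For each $\ov{x} \in C_2$ choose a resolver $\mu_{\ov{x}} = a_{\ov{x}} \phi_1^{\ov{x}} + b_{\ov{x}} \phi_2^{\ov{x}} \in \B_2(P(X))$ with $a_{\ov{x}}, b_{\ov{x}} > 0$ and $\phi_1^{\ov{x}} \neq \phi_2^{\ov{x}}$. Then $s_m(\ov{x})(t) \in Z(\mu_{\ov{x}})$ for every $t$, so applying $\wt{r}\,\wt{q}(\mu_{\ov{x}})^{-1}$ pointwise yields a path
\[
a_2(\ov{x}) : t \longmapsto \wt{r}\,\wt{q}(\mu_{\ov{x}})^{-1}\bigl(s_m(\ov{x})(t)\bigr) = \phi_1^{\ov{x}}(t) + \phi_2^{\ov{x}}(t) \in SP^2(X).
\]
By the remarks following \lemref{embed} this path is independent of the chosen resolver, so $a_2 : C_2 \to P(SP^2(X))$ is well defined; by \remref{basic} we get $a_2(\ov{x})(t_i) = \phi_1^{\ov{x}}(t_i) + \phi_2^{\ov{x}}(t_i) = 2x_i$, hence $\zeta \circ a_2 = \pa_2^m|_{C_2}$, and the universal property of $\mathcal{D}_{m,2}$ gives a section of $\sigma_2$ exactly as before.

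The main obstacle is the continuity of $a_2$, since the resolvers $\mu_{\ov{x}}$ cannot in general be chosen to vary continuously with $\ov{x}$ — precisely the phenomenon emphasised in \secref{Paths2}. I expect to dispose of it exactly as in the proof of \lemref{imp=2}: once the resolver-independence above is known, $a_2$ unwinds into a composition of continuous maps, because $\wt{r}\,\wt{q}(\mu_{\ov{x}})^{-1}$ depends only on the path $s_m(\ov{x})$ (via \lemref{embed}) and $\wt{r}$ is continuous. Concretely one checks continuity by the sequential criterion: if $\ov{x}_k \to \ov{x}$ in $C_2$, then $s_m(\ov{x}_k) \to s_m(\ov{x})$ in $\P(\B_2(X))$; the weights of a resolver do not depend on $t$ and, at times where $s_m(\ov{x})$ has support of size $2$, are recovered from the two atoms of $s_m(\ov{x})(t)$, so the unordered pairs $\{\phi_1^{\ov{x}_k}(t), \phi_2^{\ov{x}_k}(t)\}$ converge to $\{\phi_1^{\ov{x}}(t), \phi_2^{\ov{x}}(t)\}$ in $SP^2(X)$, uniformly in $t$; the only delicate instants are the timestamps and the times where the two strands of $s_m(\ov{x})$ meet, at which both $a_2(\ov{x}_k)(t)$ and $a_2(\ov{x})(t)$ lie near a diagonal point $2x_i$ of $SP^2(X)$ and hence near each other. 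With continuity of $a_2$ in place, the two sections constructed above prove the lemma.
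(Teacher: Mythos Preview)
Your proposal is correct and follows essentially the same route as the paper: the identical partition $C_1/C_2$ according to whether $s_m(\ov{x})$ admits a resolver of support one, the same lift to $P(SP^2(X))$ via $\pa_2$ on $C_1$ and via the weight-dropping map $\wt r\,\wt q(\mu_{\ov{x}})^{-1}$ of \secref{prepo} on $C_2$, and the same appeal to the pullback universal property. The paper is terser on continuity of your $a_2$, simply invoking the argument of \lemref{imp=2} (``a composition of continuous maps''), whereas you supplement this with a sequential-criterion sketch; either treatment is acceptable at the level of detail the paper works at.
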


\begin{proof}
We proceed as in Lemma~\ref{imp=2}. Since $\iTC_m(X) < 2$, there exists a $2$-intertwined $m$-navigation algorithm $s_m: X^m \to RP(\B_2(X))$. Define
    \[
    C_1 = \left\{ \ov{x} = (x_1,\ldots,x_m) \in X^m \hspace{1mm} \middle| \hspace{1mm} s_m(\ov{x}) \text{ has a resolver of support } 1 \right\}.
    \]
    We define the map $K_1:C_1 \times I \to SP^2(X)$ as follows:
    \[
    K_1:(\ov{x},t) \mapsto s_m(\ov{x})(t) \mapsto 2\hspace{0.3mm}s_m(\ov{x})(t). 
    \]
    Let $C_2 = X^m \setminus C_1$. For each $\ov{x}\in C_2$, let $\mu_{\ov{x}}=a^{\ov{x}}\phi_1^{\ov{x}}+b^{\ov{x}}\phi_2^{\ov{x}}$ be a resolver of $s_m(\ov{x})$. Note that $s_m(\ov{x})(t)\in Z(\mu_{\ov{x}})\subset \B_2(X)$ for each $t\in I$, in the notations of Section~\ref{prepo}. As before, we define the continuous path $f_{\ov{x}}:I\to SP^2(X)$ as follows:
    \[
    f_{\ov{x}}:t\mapsto s_m(\ov{x})(t)=a^{\ov{x}}\phi_1^{\ov{x}}(t)+b^{\ov{x}}\phi_2^{\ov{x}}(t) \xmapsto{\wt{r}(\mu_{\ov{x}})\wt{q}(\mu_{\ov{x}})^{-1}} \phi_1^{\ov{x}}(t)+ \phi_2^{\ov{x}}(t).
    \]
    This gives us a continuous map $K_2:C_2\times I\to SP^2(X)$ defined by the formula
    \[
    K_2(\ov{x},t)=f_{\ov{x}}(t).
    \]
    Therefore, for $i=1,2$, we obtain continuous maps $H_i:C_i \to P(SP^2(X))$ such that $\zeta H_i(\ov{x}) = (2x_1,\ldots,2x_m)$ in view of Remark~\ref{basic}. So, by the universal property of the above pullback diagram, with the roles of $X^m$, $\mathcal{D}_{m}$, and $\sigma$ replaced, respectively, with $C_i$, $\mathcal{D}_{m,i}$, and $\sigma_i$ for $i = 1,2$, we conclude that there exists a continuous section $\tau_i : C_i \to \mathcal{D}_{m,i}$ of the fibration $\sigma_i: \mathcal{D}_{m,i} \to C_i$. 
\end{proof}

For a fixed $m \ge 2$, let $\Delta_2:SP^{2}(X) \to (SP^{2}(X))^m$ be the diagonal map. Then, we use some ideas from~\cite[Theorem 4.4]{Ja} to prove the following theorem in Alexander--Spanier cohomology.

\begin{theorem}\label{boundtcm}
Suppose that $\alpha_i^* \in H^{k_i}((SP^{2}(X))^m; R)$ for ring $R$ and $k_i \ge 1$ are cohomology classes  for $i = 1,2$ such that $\Delta_2^*(\alpha_i^*) = 0$. If $\alpha_i$ are their images under the induced homomorphism $(\pa_2^m)^*$ such that $\alpha_1 \smile  \alpha_2 \neq 0$, then 
\[
\iTC_m(X) \ge 2.
\]
\end{theorem}

\begin{proof}
Let us define $g: SP^{2}(X) \to P(SP^2(X))$ by $g(x+y)=c_{x+y}$ for each $x+y\in SP^2(X)$, where $c_{x+y}(t)=x+y$ for all $t\in I$. Then, $g$ is a homotopy equivalence.
Now, suppose $\iTC_m(X) < 2$. Then by Lemma~\ref{second}, there exists a cover $\{C_1,C_2\}$ of $X^m$ and maps $\tau_i:C_i \to \mathcal{D}_{m,i}$ such that $\sigma_i \tau_i = \mathbbm{1}_{C_{i}}$, see Diagram~\ref{four4}. Let us fix some $i\in\{1,2\}$ and consider the following commutative diagram.
    \begin{equation}\label{five5}
    \begin{tikzcd}[every arrow/.append style={shift left}]
H^{k_i}(C_i;R)  \arrow{r}{\sigma_i^*}
& 
H^{k_i}(\mathcal{D}_{m,i};R) \arrow{l}{{\tau_i^* \vphantom{1}}}
\\
H^{k_i}((SP^{2}(X))^m;R) \arrow{r}{\zeta^*} \arrow[swap]{u}{(\pa_2^m)^*} \arrow{d}{\Delta_2^*}
& 
H^{k_i}(P(SP^{2}(X));R) \arrow[swap]{u}{a^*} \arrow{dl}{g*}
\\
H^{k_i}(SP^2(X);R) 
& 
\end{tikzcd}
    \end{equation}
    Since $g^*$ is an isomorphism, $\Delta_2^*(\alpha_i^*) = 0 \implies \zeta^*(\alpha_i^*) = 0$ in the triangle in the bottom. The commutativity of the square on the top then gives
    $$
    (\pa_2^m)^*(\alpha_i^*) = \tau_i^* a^*\zeta^*(\alpha_i^*) = 0.
    $$
Due to the long exact sequence of the pair $((SP^{2}(X))^m,C_i)$ in Alexander--Spanier cohomology, 
$$
\cdots \to H^{k_i}((SP^{2}(X))^m,C_i; R) \xrightarrow{j_i^*} H^{k_i}((SP^{2}(X))^m;R) \xrightarrow{(\pa_2^m)^*} H^{k_i}(C_i;R) \to \cdots,
$$
there exists $\ov{\alpha_i^*} \in H^{k_i}((SP^{2}(X))^m,C_i; R)$ such that $j_i^*(\ov{\alpha_i^*}) = \alpha_i^*$. Further, let $(\pa_2^m)^*(\ov{\alpha_i^*}) = \ov{\alpha}_i \in H^{k_i}(X^m,C_i; R)$. Consider the following commutative diagram, where $k=k_1+k_2$ and $j$ (resp. $j'$) is the sum of the maps $j_1$ and $j_2$ (resp. $j_1'$ and $j_2'$).
\begin{equation}\label{six6}
\begin{tikzcd}[contains/.style = {draw=none,"\in" description,sloped}]
H^{k}(X^m;R)  
& 
\arrow[swap]{l}{(j')^*} H^{k}\left(X^m,C_1 \cup C_2;R\right)  
\\
H^{k}\left((SP^{2}(X))^m;R\right)  \arrow[swap]{u}{(\pa_2^m)^{*}}
& 
H^k \left((SP^{2}(X))^m,C_1 \cup C_2;R\right) \arrow[swap]{u}{(\pa_2^m)^{*}} \arrow[swap]{l}{j^*} 
\end{tikzcd}
\end{equation}
The cup product $\ov{\alpha_1^*} \smile \ov{\alpha_2^*}$ in the bottom-right goes to $\alpha_1 \smile \alpha_2 \neq 0$ in the top-left. But in the process, it factors through $\ov{\alpha}_1 \smile  \ov{\alpha}_2 \in H^k (X^m,X^m;R) = 0$. This is a contradiction. Hence, we must have $\iTC_m(X) \ge 2$.
\end{proof}

As explained at the end of Section~\ref{prepo}, this lower bound cannot be improved in general using our methods. However, if Lemma~\ref{second} is proved for some $n \ge 3$ by some other method, then our proof of Theorem~\ref{boundtcm} will still work by accordingly replacing the role of the two cohomology classes $\alpha_i^*$ with more number of classes.

\section{Computations}\label{Computations}

In this section, we use our lower bounds from Section~\ref{Bounds} to justify the non-triviality of the intertwining invariants in several cases.

\subsection{For $\icat$}\label{last1}
Computations of $\dcat(X)$ for various spaces $X$ were done in~\cite[Section 6.1]{DJ} using their rational cup lengths $c\ell_{\Q}(X)$. Here, we will use Corollary~\ref{rationalcl} to get various examples of closed manifolds $X$ for which $\icat(X) = 2$. 

\begin{example}\label{2cat}
Using rational and real cup lengths and the $\cat$ values, we get:
\vspace{-1mm}
\begin{itemize}
        \itemsep-0.25em 
    \item $\icat(\Sigma_g) = 2$ for each closed orientable surface $\Sigma_g$ of genus $g \ge 1$.
    \item $\icat(\C P^2) = 2$.
    \item $\icat( S^{k_1}\times S^{k_2}) = 2$ for each $k_i \ge 1$.
    \item If $(M^4,\omega)$ is a closed simply connected $4$-dimensional symplectic manifold such that $[\omega]^2 \ne 0$ for the de Rham cohomology class $[\omega] \in H^2(M^4;\R)$ corresponding to the 
    symplectic $2$-form $\omega$, then $\icat(M) = 2$. 
\end{itemize}
\end{example}

\begin{remark}\label{sum}
    Let $M$ and $N$ be two closed $k$-dimensional manifolds of which at least one is orientable.
    We know that $\wt{H}_i(M \# N) = \wt{H}_i(M) \oplus \wt{H}_i(N)$ for all $i < k$. Let $(m,n) \in H^i(M \#N)$ and $(m',n') \in H^j(M \# N)$. Then,
    $$
    (m,n) \smile (m',n') = \begin{cases}
        \left(m\smile m',n \smile n'\right) & : i + j < k \\
        m \smile m' + n \smile n' & : i + j = k
    \end{cases}
    $$
    Thus, for $\F \in \{\Q,\R\}$, $\max\{c\ell_{\F}(M),c\ell_{\F}(N) \}\le c\ell_{\F}(M \# N)$.
\end{remark}

\begin{example}\label{sumuse}
    Using Corollary~\ref{rationalcl} with the corresponding $\F$ cup lengths, Remark~\ref{sum}, and the connected sum formula $\cat(M \# N) = \max\{\cat(M),\cat(N)\}$ from~\cite[Proposition 11]{DS}, we get the following conclusions.
    \begin{itemize}
            \itemsep-0.25em 
    \item For $3\le n \le 5$, let $K^n$ denote a closed simply connected $n$-dimensional manifold and $M^4$ be an orientable symplectic manifold as in Example~\ref{2cat}. Note that $\cat(K^n)\le 2$ due to~\cite[Theorem 1.50]{CLOT}. Thus, the $\icat$ value for each of the following five classes of manifolds is $2$.
    \[
    K^3\# (S^1 \times S^2), \hspace{1mm} K^4 \# \hspace{0.5mm}\C P^2, \hspace{1mm} K^4 \# M^4, \hspace{1mm}  K^4 \# (S^2 \times S^2), \hspace{1mm} K^5 \# (S^3 \times S^2).
    \]
     \item $\icat(\Sigma_g\# L^2) = 2$ for any closed $2$-dimensional manifold $L^2$ and $g \ge 1$.
    \end{itemize}
\end{example}

\begin{remark}
    Let $N_g$ denote the closed non-orientable surface of genus $g \ge 1$. For each $g \ge 2$, $\dcat(N_g)=2$ is computed using the covering map inequality from~\cite[Theorem 3.7]{DJ}. Since we do not know whether such an inequality holds in the case of $\icat$ as well, we instead use the fact that that $N_g=\Sigma_1 \# N_{g-2}$ for $g \ge 3$. So, we can take $L^2 = N_{g-2}$ in Example~\ref{sumuse} and get $\icat(N_g)=2$ for all $g \ge 3$.
\end{remark}

Since $\icat(N_1) = 1$, this leaves open only the case of $N_2$, the Klein bottle. 

\begin{example}
Let $L$ be a smooth simply connected manifold of dimension $4$, other than $S^4$. Then $L$ is homotopic to $M \# N$ where $M \in \{\C P^2,S^2 \times S^2,-\C P^2\}$. By Remark~\ref{sum} and the fact that $\cat(L) = 2$ (see~\cite[Section 7]{DS}), we get $\icat(L) = 2$. 
\end{example}

\subsection{For $\iTC_m$} For the case of $\iTC:=\iTC_2$, we will use Corollary~\ref{rationalzcl} and~\cite{Far1} to get some examples of locally finite CW complexes $X$ for which $\iTC(X) = 2$. 

\begin{example}Using rational zero-divisor cup lengths and $\TC$ values, we get:
\vspace{-1.5mm}
\begin{itemize} 
        \itemsep-0.25em 
    \item $\iTC(S^{2n-1}) = 1$ and $\iTC(S^{2n}) = 2$ for each $n\ge 1$.
    \item $\iTC(S^{2k_1-1}\times S^{2k_2-1}) = 2$ for any $k_i \ge 1$.
    \item $\iTC(X) = 2$ if $X$ is a finite graph having first Betti number greater than $1$.
\end{itemize}
For the $2$-torus $\Sigma_1$, since it is a topological group, we get from Corollary~\ref{usetopgrp} and Example~\ref{2cat} that $\iTC(\Sigma_1) = \icat(\Sigma_1) = 2$.
\end{example}

For all the closed manifolds $X$ considered in the previous subsection, we see that $\icat(X) = \dcat(X) = \cat(X)$. Also, for all the finite CW complexes $Y$ considered till this point in this section, we see that $\iTC(Y) = \dTC(Y) = \TC(Y)$.

We now focus on computing $\iTC_m$ values for certain finite CW complexes for each $m \ge 3$. The technique is similar to that used in~\cite[Section 7]{Ja} for $\dTC_m$. However, the proofs need to be modified because we need cohomology classes in the ring of $(SP^{2}(X))^m$ now as opposed to cohomology classes in the ring of $SP^{2}(X^m)$ in the case of $\dTC_m$ for $n=2$. We prove the following statement to explicitly demonstrate our technique, using which, similar computations can be done to get more estimates for $\iTC_m(X)$ in the same way as the computations were done in~\cite[Section 7.B]{Ja} to get more estimates for $\dTC_m(X)$. 

\begin{proposition}\label{tech}
    Let $X$ be a finite CW complex and $m \ge 3$. If $H^d(X;\F) \ne 0$ for some $d \ge 1$ and $\F \in \{\Q,\R\}$, then $\iTC_m(X) \ge 2$.
\end{proposition}

\begin{proof}
    For brevity, call $Y = SP^{2}(X)$. Let $v \in H^d(X;\F)$ such that $v \neq 0$. Due to Proposition~\ref{useful}, there exists $w \in H^d(Y;\F)$ such that $\pa_{2}^*(w) = v$, where $\pa_{2}:X \to Y$ is the diagonal map. For $1 \le i \le m$, let $\text{proj}_i : X^m \to X$ and $k_i: Y^m\to Y$ be the projections onto the respective $i^{th}$ coordinates. Let $\pa_2^m:X^m \to Y^m$ be the product map, and $\Delta: X \to X^m$ and $\Delta_2:Y \to Y^m$ be the diagonal maps. Then for each $i$, the following diagram commutes.
\begin{equation}\label{seven7}
\begin{tikzcd}
Y
&
Y^m \arrow{l}[swap]{k_i}
&
Y \arrow{l}[swap]{\Delta_{2}}
\\ 
X \arrow{u}{\pa_{2}}
& 
X^m \arrow{l}{\text{proj}_i} \arrow{u}{\pa_{2}^{m}}
&
X \arrow{l}{\Delta} \arrow{u}[swap]{\pa_{2}}
\end{tikzcd}
\end{equation}
For each $1 \le i \le m-1$, we define maps $\phi_i:H^d(X;\F) \oplus H^d(X;\F) \to H^d(X^m;\F)$ as $\phi_i(x \oplus y) = \proj_i^*(x) - \proj_m^*(y)$ and elements $\alpha_i = \phi_i(v \oplus v)$ exactly like in~\cite[Proposition 7.1]{Ja}. Similarly, we define $\psi_i:H^d(Y;\F) \oplus H^d(Y;\F) \to H^d(Y^m;\F)$ as $\psi_i(a \oplus b) = k_i^*(a) - k_m^*(b)$ and $\alpha_i^* = \psi_i(w \oplus w)$. Then the following commutes.
\begin{equation}\label{eight8}
\begin{tikzcd}[contains/.style = {draw=none,"\in" description,sloped}]
w \oplus w \ar[r,mapsto] \ar[d,contains] & \alpha_i^* \ar[d,contains] \ar[r,mapsto] & w-w = 0 \ar[d,contains]
\\
H^{d}(Y;\F)  \oplus H^{d}(Y;\F)  \arrow{r}{\psi_i}  \arrow[swap]{d}{\pa_{2}^* \oplus \hspace{1mm} \pa_{2}^* \hspace{1mm}}
&
H^d(Y^m;\F)  \arrow{r}{\Delta_{2}^*} \arrow[swap]{d}{(\pa_{2}^{m})^*}
&
H^d(Y;\F) \arrow[swap]{d}{\pa_{2}^*}
\\
H^{d}(X;\F)  \oplus H^{d}(X;\F)  \arrow{r}[swap]{\phi_i} 
&
H^d(X^m;\F)  \arrow{r}[swap]{\Delta^*} 
&
H^d(X;\F)
\\
v \oplus v \ar[r,mapsto] \ar[u,contains] & \alpha_i \ar[u,contains] \ar[r,mapsto] & v-v = 0 \ar[u,contains]
\end{tikzcd}
\end{equation}
On the top-right, $w-w =0$ because the top (resp. bottom) row when restricted to either of the components $H^d(Y;\F)$ (resp. $H^d(X;\F)$) is an isomorphism. Due to~\cite[Proposition 3.5]{Rud}, we have 
$$
\alpha_1 \smile \cdots \smile \alpha_{m-1} \neq 0.
$$
Since $m \ge 3$, we can take $k_i=d$ and $\alpha_i^*\in H^{k_i}((SP^2(X))^m;\F)$ for $i = 1,2$ in the statement of Theorem~\ref{boundtcm} to get $\iTC_m(X) \ge 2$.
\end{proof}

Of course, if $H^d(X;\F) \ne 0$ for some $d \ge 1$ and $\F \in \{\Q,\R\}$, then $\iTC_3(X) \ge 2$ by Proposition~\ref{tech}, and using Proposition~\ref{ineq6}, we get that $\iTC_m(X) \ge 2$ for all $m \ge 4$ as well. However, we proved Proposition~\ref{tech} by specifying the indexes $m$ so that if the lower bound in Theorem~\ref{boundtcm} is improved, then the technique of Proposition~\ref{tech} remains useful more generally to obtain for each $m \ge 3$ better lower bounds to $\iTC_m(X)$ that depend on $m$, just like in~\cite[Proposition 7.1]{Ja} in the case of $\dTC_m$.

\begin{remark}
We note that the hypothesis in 
Proposition~\ref{tech}, that $H^d(X;\F) \ne 0$ for some $d \ge 1$ and $\F\in\{\Q,\R\}$, cannot be dropped for any $m \ge 3$ due to the example of Higman's group $\H$ for which $\iTC_m(\H) = 1$ for all $m \ge 2$ by Proposition~\ref{higman}.
\end{remark}

\begin{corollary}\label{veryobv}
    If $X$ is a closed orientable manifold or a closed non-orientable surface of genus $g > 1$, then $\iTC_m(X) \ge 2$ for all $m \ge 3$. 
\end{corollary}

\begin{example}
    For each $n \ge 1$, $\iTC_3(S^{2n-1}) = 2$ due to Corollary~\ref{veryobv} and the fact that $\TC_3(S^{2n-1}) = 2$, see~\cite[Section 4]{Rud}. Hence, the non-decreasing sequence $\{\iTC_m(S^{2n-1})\}_{m \ge 2}$ is not constant for any $n$. It is noteworthy that the equality $\iTC_{3}(S^{2n-1}) = 2$ cannot be obtained by any other result mentioned in this paper. So, Corollary~\ref{veryobv} indeed gives us new computations.
\end{example}

\section*{Acknowledgement}
The author would like to thank Alexander Dranishnikov for his kind guidance and various insightful discussions. The author is also grateful to the anonymous referee for several helpful suggestions and corrections to the first draft and the first revision of this paper.


\end{document}